%\nonstopmode
\documentclass[12pt]{amsart}
\usepackage{graphicx}
\usepackage{latexsym}
\usepackage{fancyhdr}
\usepackage{amsmath, amssymb}
 \usepackage[utf8]{inputenc}
\usepackage[all]{xy}
\usepackage{pdflscape}
\usepackage{longtable}
\usepackage{rotating}
\usepackage{hyperref}
\usepackage{subfigure}
\usepackage{tensor}
\usepackage{enumerate}
\usepackage[textsize=small]{todonotes}

\usepackage{color}

\setlength{\oddsidemargin}{0in} \setlength{\evensidemargin}{0in}
\setlength{\textwidth}{6.5in} \setlength{\topmargin}{0in}
\setlength{\textheight}{8.6in} \setlength{\parindent}{1pc}

\vfuzz2pt % Don't report over-full v-boxes if over-edge is small
\hfuzz2pt % Don't report over-full h-boxes if over-edge is small

\theoremstyle{plain}
\newtheorem{thm}{Theorem}[section]

\newtheorem{cor}[thm]{Corollary}
\newtheorem{lem}[thm]{Lemma} 
\newtheorem{prop}[thm]{Proposition}

\theoremstyle{definition}

\theoremstyle{remark}
\newtheorem{rem}[thm]{Remark}
\numberwithin{equation}{section}

\newtheorem{example}[thm]{Example}

\newcommand{\lgw}{\longrightarrow}
\newcommand{\lgm}{\longmapsto}
\newcommand{\si}{\sigma}

\newcommand{\Ker}{\text{Ker}}
\renewcommand{\Im}{\text{Im}}

\newcommand{\la}{\lambda}

\newcommand{\ma}{\mathfrak{m}}

\newcommand{\R}{\mathbb{R}}
\newcommand{\K}{\mathbb{K}}

\newcommand{\N}{\mathbb{N}}

\newcommand{\C}{\mathbb{C}}

\newcommand{\Q}{\mathbb{Q}}
\renewcommand{\t}{\tau}

\renewcommand{\lg}{\langle}
\newcommand{\rg}{\rangle}

\renewcommand{\phi}{\varphi}

\let\mathscr\mathcal

\DeclareMathOperator{\grad}{grad }

\newcommand{\prodscal}[2]{\left<#1,#2\right>}

% ----------------------------------------------------------------
\vfuzz2pt % Don't report over-full v-boxes if over-edge is small
\hfuzz2pt % Don't report over-full h-boxes if over-edge is small

% ----------------------------------------------------------------
\begin{document}
\title{Local topological algebraicity of analytic function germs}
\author[M. Bilski, A. Parusi\'nski, G. Rond]{Marcin Bilski, Adam Parusi\'nski, and Guillaume Rond}

%\address{School of Mathematics, University of Sydney,   Sydney, NSW, 2006, Australia }%
%\email{tck@maths.usyd.edu.au; laurent@maths.usyd.edu.au}%

\address{Department of Mathematics and Computer Science, Jagiellonian University, \L ojasiewicza 6, 30-348 Krak\'ow, Poland}
\email{marcin.bilski@im.uj.edu.pl}

\address{Laboratoire J. A. Dieudonn\'e, Universit\'e de Nice-Sophia Antipolis, Parc Valrose, 06108 Nice Cedex 02, France}
\email{adam.parusinski@unice.fr}

\address{Aix Marseille Universit\'e, CNRS, Centrale Marseille, I2M, UMR 7373, 13453 Marseille, France}
\email{guillaume.rond@univ-amu.fr}

\thanks{The authors were partially supported by ANR project STAAVF (ANR-2011 BS01 009)}

\date{January 11, 2014}

\keywords{topological equivalence of singularities, Artin approximation, Zariski equisingularity}
\subjclass[2010]{32S05,32S15,13B40}

\begin{abstract}
T. Mostowski showed that every (real or complex) germ of
an analytic set is  homeomorphic to the germ of an algebraic set.
In this paper we show that every (real or complex) analytic function germ, defined
on a possibly singular analytic space,  is topologically equivalent to a 
polynomial function germ defined on an affine algebraic variety. 
\end{abstract}

%%%%%%%%%%%%%%%%%%%%%%%%%%%%%%%%%%%%%%%%%%%%%%%%%%%%%%
\maketitle

%\tableofcontents

%%%%%%%%%%%%%%%%%%%%%%%%%%%%%%%%%%%%%%%%%%%%%%%%%%%%%%%%%%

%%%%%%%%%%%%%%%%%%%%%%%%%%%%%%%%%%%%%%%%%%%%%%%%%%%%%%%%%
\section{Introduction and statement of results}
The problem of approximation of analytic objects (sets or mappings) by algebraic ones has attracted many mathematicians,  see e.g. \cite{bochnakkucharz1984} and the bibliography therein.  
Nevertheless there are very few positive results if one requires that the approximation gives a  
homeomorphism between the approximated object 
and the  approximating one.  
In this paper we consider two cases of this problem: the local algebraicity of analytic sets and 
the local algebraicity of analytic functions.   
The problem can be considered over $\K = \R$ or $\C$.  

The local topological algebraicity of analytic sets has been established by Mostowski in \cite{mostowski}.  More precisely, given an analytic set germ $(V,0) \subset (\K^n,0)$, Mostowski shows the existence 
of a local homeomorphism $\tilde h: (\K^{2n+1},0) \to (\K^{2n+1},0)$ such that, after the embedding 
$(V,0) \subset (\K^n,0) \subset (\K^{2n+1},0)$, the image $\tilde h(V)$ is algebraic.  
It is easy to see that Mostowski's proof together with  Theorem 2 of \cite{bochnakkucharz1984} gives 
the following result.  

\begin{thm} \label{homeotoalgebraic}  
Let  $\K = \R$ or $\C$.  
Let  $(V,0) \subset (\K^n,0)$ be an analytic germ.  
 Then there is a homeomorphism $h: (\K^n,0) \to (\K^n,0)$ such that $h(V)$ 
 is the germ of an algebraic subset of $\K^n$.  
\end{thm}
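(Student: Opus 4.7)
The plan is to first invoke Mostowski's theorem to algebraize $V$ after an ambient homeomorphism of $(\K^{2n+1},0)$, and then to apply Theorem~2 of \cite{bochnakkucharz1984} to descend the ambient dimension from $2n+1$ back to $n$.

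I would begin by embedding $(V,0)$ in the higher-dimensional ambient space through the standard linear inclusion $\iota:(\K^n,0)\hookrightarrow(\K^{2n+1},0)$, $x\mapsto(x,0)$. Mostowski's theorem applied to $\iota(V)$ furnishes a germ of homeomorphism $\tilde h:(\K^{2n+1},0)\to(\K^{2n+1},0)$ such that $W:=\tilde h(\iota(V))$ is the germ of an algebraic subset of $\K^{2n+1}$. This achieves the topological algebraization of $V$, but only at the cost of $n+1$ extra ambient dimensions, so the remaining work is to bring $W$ back down to $\K^n$ without spoiling the topological equivalence with $V$.

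Next I would perform the descent. Here the role of Theorem~2 of \cite{bochnakkucharz1984} is to supply the missing bridge between the algebraic germ $W\subset(\K^{2n+1},0)$ and an algebraic germ $V'\subset(\K^n,0)$ topologically equivalent to $V$. Combining $\iota$, $\tilde h$, and the projection-type data provided by this result then yields a genuine ambient homeomorphism $h:(\K^n,0)\to(\K^n,0)$ with $h(V)=V'$, which is the desired conclusion.

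The step I expect to be the main obstacle is precisely this dimension reduction. A naive linear projection $\K^{2n+1}\to\K^n$ generally fails to be injective on $W$: the inequality $n\geq 2\dim W+1$ demanded by a Whitney-type generic-projection argument need not hold when $\dim V$ is close to $n$. Moreover, even if such a projection were a homeomorphism of $W$ onto its image, promoting it to an ambient homeomorphism of $(\K^n,0)$ is not automatic and requires a topological extension in a possibly singular setting. Theorem~2 of \cite{bochnakkucharz1984} is tailored to circumvent both difficulties at once; once it is applied, the theorem is immediate.
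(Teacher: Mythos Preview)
Your proposal rests on a misreading of Theorem~2 of \cite{bochnakkucharz1984}. That theorem is not a dimension-reduction device: it states that if $(V,0)\subset(\K^n,0)$ is a \emph{Nash} set germ, then there is a Nash diffeomorphism $\sigma:(\K^n,0)\to(\K^n,0)$ such that $\sigma(V)$ is algebraic. In other words, it straightens a Nash germ into an algebraic one \emph{within the same ambient space}; it provides no projection from $\K^{2n+1}$ down to $\K^n$, and no mechanism for turning an algebraic germ $W\subset(\K^{2n+1},0)$ into an algebraic germ in $(\K^n,0)$ together with an ambient homeomorphism of $(\K^n,0)$. The obstacles you correctly identify in your last paragraph are therefore not resolved by that reference, and the argument as written has a genuine gap at the descent step.

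The point of the remark preceding the theorem is that one should use Mostowski's \emph{proof}, not merely his statement. Inside that proof (and in the paper's own Section~\ref{mostowskisection}) one first produces, via a P{\l}oski-type approximation combined with Varchenko's topological equisingularity, a homeomorphism of $(\K^n,0)$ carrying $V$ onto a \emph{Nash} germ in $(\K^n,0)$; Mostowski then needed the extra $n+1$ coordinates only to pass from Nash to algebraic. Replacing that last step by Theorem~2 of \cite{bochnakkucharz1984} (which performs the Nash-to-algebraic straightening in the original $\K^n$) yields the theorem directly, with no dimension reduction ever required. This is exactly the route taken in the paper: Corollary~\ref{homeotoNash} gives the homeomorphism onto a Nash germ, and Theorem~\ref{Nashtoalgebraic} (i.e.\ \cite{bochnakkucharz1984}, Theorem~2) finishes.
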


Mostowski's Theorem seems not to be widely known.  Recently Fern\' andez de Bobadilla showed, by a method 
different from that of Mostowski, the local topological algebraicity of complex hypersurfaces with one-dimensional 
singular locus, see \cite{bobadilla}.  We remark that in \cite {mostowski} Mostowski states  his results 
only for $\K=\R$ but his proof works, word by word, for $\K=\C$.   

The first purpose of this paper is to present a short proof of Theorem \ref{homeotoalgebraic}.  
We follow closely  Mostowski's original approach that is based on two ideas, P{\l}oski's  version of Artin approximation, cf.   
\cite{ploski1974}, and Varchenko's theorem stating that the algebraic equisingularity of Zariski  implies topological 
equisingularity.   Our proof is shorter, but less elementary.  We use a corollary of Neron Desingularization, 
that we call the Nested Artin-P{\l}oski Approximation Theorem.  This result is shown in Section \ref{nestedploski} 
and the proof of Theorem \ref{homeotoalgebraic} is given in Section \ref{mostowskisection}.  
In Section \ref{varchenko} we recall Varchenko's results.

The second purpose of this paper is to show the local algebraicity of analytic functions.  
\begin{thm} \label{homeotopolynomial}  
Let  $\K = \R$ or $\C$.  
 Let  $g: (\K^n,0)\to (\K, 0)$ be an analytic function germ.  
 Then there is a homeomorphism $\sigma : (\K^n,0) \to (\K^n,0)$ such that $g\circ \sigma$ 
 is the germ of a polynomial.  
\end{thm}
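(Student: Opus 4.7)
The plan is to imitate the proof of Theorem \ref{homeotoalgebraic}, applied this time to the graph $\Gamma_g = \{(x,y) \in \K^n \times \K : y = g(x)\}$ rather than to an arbitrary analytic set, with extra care to produce a homeomorphism that respects the projection $(x,y)\mapsto y$ onto the target.

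First, I would introduce coordinates $(x_1,\ldots,x_n,y)$ on $\K^{n+1}$ and describe the hypersurface $\{y-g(x)=0\}$ by a nested Weierstrass cascade of the type underlying Zariski equisingularity. The top-level pseudo-polynomial is $y - g(x_1,\ldots,x_n)$ itself, already distinguished and of degree one in $y$. After choosing generic coordinates one then inductively applies Weierstrass preparation to the discriminant of each pseudo-polynomial in the lowest-indexed variable still available, obtaining a finite cascade whose coefficients at level $k$ depend only on $x_1,\ldots,x_{n-k}$. These coefficients assemble into a point $w_0$ in a space of power series, satisfying a nested system of analytic equations that encode the pseudo-polynomial relations between adjacent steps.

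Next, I apply the Nested Artin-P{\l}oski Approximation Theorem of Section \ref{nestedploski} to this system, obtaining polynomial coefficients $w_1$ arbitrarily close to $w_0$ and satisfying the same nested relations. Letting the cascade coefficients vary yields a family $F(x,y,w)$ which is Zariski equisingular along any path in parameter space preserving the discriminantal pattern, and one connects $w_0$ to $w_1$ by such a path. Varchenko's theorem (Section \ref{varchenko}) then produces a continuous family of local homeomorphisms of $(\K^{n+1},0)$ trivializing this family. Because the top pseudo-polynomial is linear in $y$ with leading coefficient one, the hypersurface at $w_1$ is the graph of some polynomial function $p(x_1,\ldots,x_n)$.

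The main obstacle is to arrange that Varchenko's trivializing homeomorphism fixes the $y$-coordinate, so that it carries $\Gamma_g$ to the graph of $p$ and descends to a homeomorphism $\sigma$ of $(\K^n,0)$ with $g\circ\sigma = p$. This should be possible precisely because $y$ appears only in the top pseudo-polynomial (linearly, with coefficient one) while all subsequent discriminants live in the $x$-variables: the stratified vector fields produced by Varchenko then have vanishing $y$-component, and the resulting flow preserves the projection onto the $y$-axis. A further delicate point is to verify that the cascade data for $g$ genuinely satisfies a nested system of the precise shape required by the Nested Artin-P{\l}oski Approximation Theorem, so that the approximation can be carried out without destroying the Weierstrass-distinguished structure on which the Zariski equisingularity of the family relies.
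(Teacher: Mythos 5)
Your plan rightly identifies the main ingredients -- the Nested Artin-P{\l}oski approximation applied to a Zariski cascade built from a graph, and a Varchenko trivialization respecting the output coordinate -- but the setup as proposed breaks down at the very first step. Placing the graph variable $y$ at the top of the cascade and taking $y-g(x)$ as the top-level pseudopolynomial leads nowhere: a monic polynomial of degree one in $y$ has constant discriminant, so there is no nontrivial descent to lower discriminant loci and the Zariski tower collapses immediately. The paper does the opposite: it writes the graph as $x_1=g(x_2,\dots,x_n)$ with the protected coordinate at the \emph{bottom}, takes $\prod_m(x_1-g_m)$ as a pseudopolynomial \emph{in $x_n$} after a generic linear change in $(x_2,\dots,x_n)$ alone (never touching $x_1$), and iterates discriminants downward in $x_n,x_{n-1},\dots,x_2$. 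This ordering is essential because Varchenko's homeomorphism is built inductively from the lowest variable up, and the coordinate one wants to preserve must sit at the base of the tower, not the top.

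You also wave at, but do not supply, two further ingredients. First, to ensure Varchenko's homeomorphism preserves the $x_1$-levels, the cascade must factor out the maximal power of $x_1$ from each generalized discriminant before applying Weierstrass preparation; this is exactly the modification behind conditions $(2')$, $(3')$ of Subsection \ref{pseudopolynomials2}, which force $F_1\equiv 1$ and provide the inductive base $h_{1,t}(x_1)=x_1$. Without this factoring the discriminant set need not lie in $\{x_1=0\}$, and your assertion that the Varchenko flow has vanishing component in the protected direction does not follow. Second, the Nested Artin-P{\l}oski Theorem yields \emph{algebraic} power series, not polynomials, so the Varchenko step only produces $g\circ\sigma=\hat g$ with $\hat g$ a Nash germ; the paper still needs Theorem \ref{bochnakkucharzbetter} (a Nash diffeomorphism making $\hat g$ a polynomial times an analytic unit, via Artin--Mazur and a generic projection) and Theorem \ref{homeoifunit} (a Thom--Mather argument removing the unit while preserving the prescribed sets). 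Your claim that the approximated hypersurface is already the graph of a polynomial skips these two stages entirely.
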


The proof of Theorem \ref{homeotopolynomial}, presented in Section \ref{functiongerms}, is based on  
the Nested Artin-P{\l}oski Approximation Theorem and a refinement of Varchenko's method.

We end with the following generalization of Theorems \ref{homeotoalgebraic} and \ref{homeotopolynomial}.

\begin{thm} \label{generalhomeo}  
Let  $\K = \R$ or $\C$.  
Let  $(V_i,0) \subset (\K^n,0)$ be a finite family of analytic set germs and let  $g: (\K^n,0)\to (\K, 0)$ be an analytic function germ.  Then there is a homeomorphism $\sigma : (\K^n,0) \to (\K^n,0)$ such that $g\circ \sigma$ 
 is the germ of a polynomial, and  for each $i$, $\sigma^{-1} (V_i)$ 
 is the germ of an algebraic subset of $\K^n$.  
\end{thm}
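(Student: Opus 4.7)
The plan is to combine the strategies behind Theorems \ref{homeotoalgebraic} and \ref{homeotopolynomial} by enlarging the family of analytic functions fed into the approximation and equisingularity machinery. First, for each $i$, I would fix an analytic defining tuple $(f_{i,1},\ldots,f_{i,k_i})$ for $V_i$, so that $V_i=\{f_{i,1}=\cdots=f_{i,k_i}=0\}$ as a germ at $0$. Collecting all the $f_{i,j}$ into a single tuple $(f_1,\ldots,f_N)$, I would then consider the analytic map
$$
F=(f_1,\ldots,f_N,g):(\K^n,0)\longrightarrow(\K^{N+1},0),
$$
which simultaneously encodes the analytic sets $V_i$ and the analytic function $g$.

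Next I would mimic the proof of Theorem \ref{homeotopolynomial}, but with the nested system of polynomial equations expressing Zariski equisingularity of $g$ replaced by the analogous nested system expressing simultaneous Zariski equisingularity along all components of $F$. The Nested Artin--P{\l}oski Approximation Theorem applied to this enlarged system produces polynomials $\tilde f_1,\ldots,\tilde f_N,\tilde g$ that agree with $f_1,\ldots,f_N,g$ to high order and carry the same (now polynomial) equisingularity data. Placing $F$ and $\tilde F$ into a one-parameter Zariski equisingular family and applying Varchenko's theorem then gives a homeomorphism $\sigma:(\K^n,0)\to(\K^n,0)$ which, by the refinement used in the proof of Theorem \ref{homeotopolynomial}, satisfies $g\circ\sigma=\tilde g$, and which, by the zero-set-preserving mechanism behind Theorem \ref{homeotoalgebraic}, maps $\{\tilde f_j=0\}$ onto $\{f_j=0\}$ for every $j$. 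Consequently $g\circ\sigma$ is polynomial and $\sigma^{-1}(V_i)=\bigcap_{j}\{\tilde f_{i,j}=0\}$ is the germ of an algebraic subset of $\K^n$ for each $i$.

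The main obstacle I anticipate is verifying that the refined Varchenko construction used in Theorem \ref{homeotopolynomial} retains its zero-set-preserving properties when auxiliary functions are added to the equisingularity data. In the single-function case one must carefully control the discriminant of $g$ and adjust the horizontal homeomorphism so that the level sets of $g$ are mapped pointwise onto those of $\tilde g$; in the joint case, one enlarges the discriminant locus to include the hypersurfaces $\{f_j=0\}$ and demands only that their zero sets, not their individual values, be preserved---which is precisely the type of stratumwise condition that Zariski equisingularity automatically provides. Once this compatibility is confirmed, the nestedness of the Artin--P{\l}oski approximation guarantees that the polynomial approximants $\tilde f_j,\tilde g$ support the same stratified equisingular deformation as the analytic originals, and a single homeomorphism $\sigma$ witnesses both conclusions at once.
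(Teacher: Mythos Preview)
Your proposal correctly identifies the strategy for the first half of the argument—encoding the $V_i$ through auxiliary functions and running the refined Varchenko construction on an enlarged family is exactly what the paper does in Proposition~\ref{homeotoNash2}. But there is a genuine gap: the Nested Artin--P\l oski Approximation Theorem does \emph{not} produce polynomials. Its output consists of algebraic power series (Nash germs), and the ``approximation'' is not agreement to high order but a parametrized family specializing to the original. So the $\tilde f_j$ and $\tilde g$ you obtain are only Nash; after your homeomorphism, $g\circ\sigma$ is a Nash germ and each $\sigma^{-1}(V_i)$ is a Nash set germ. You have recovered Proposition~\ref{homeotoNash2}, not Theorem~\ref{generalhomeo}.

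The passage from Nash to polynomial requires two further ingredients that your sketch omits entirely and that do not come from Zariski equisingularity. First, Theorem~\ref{bochnakkucharzbetter} (a sharpening of Bochnak--Kucharz, proved via Artin--Mazur normalization) provides a Nash \emph{diffeomorphism} after which each $V_i$ becomes genuinely algebraic and $g$ becomes a polynomial times an analytic unit $u$. Second, one must kill the unit $u$ without disturbing the now-algebraic $V_i$; this is Theorem~\ref{homeoifunit}, obtained by building a Thom stratification of the deformation $F(x,t)=f(x)\bigl(1+t(u(x)-1)\bigr)$ compatible with all the $V_i$ and applying the second Thom--Mather Isotopy Lemma. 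Neither step can be absorbed into a single Varchenko-type deformation of the kind you propose. As a secondary point, even at the Nash stage the paper does not simply adjoin the generators $f_{i,j}$ to the equisingularity data: to make one homeomorphism handle both $g$ and the $V_i$, it arranges that each $V_i$ is cut out by differences $g_m-g$, so that restricting $h_t$ to the graph of $g$ automatically carries the loci $\{g_m=g\}$ correctly (see~\eqref{homeoset}); your ``enlarge the discriminant locus'' description does not quite capture this.
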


\begin{cor}
Let  $g: (V,p)\to (\K,0)$ be an analytic function germ defined on the germ $(V,p)$ of an analytic space.  Then there 
exists an algebraic affine variety $V_1$, a point $p_1\in V_1$, the germ of a polynomial function $g_1:(V_1,p_1)\to 
(\K,0)$ and  a homeomorphism $\sigma : (V_1,p_1) \to (V,p)$ such that $g_1= g\circ \sigma$.  
\end{cor}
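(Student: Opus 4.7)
The plan is to reduce the corollary to Theorem \ref{generalhomeo} by embedding the germ $(V,p)$ in a smooth ambient space and extending $g$ there. First, by the very definition of an analytic space germ, one can choose a local closed embedding $\iota:(V,p)\hookrightarrow(\K^n,0)$ whose associated ring map identifies $\O_{V,p}$ with $\O_{\K^n,0}/I$ for some ideal $I$. Consequently the analytic function germ $g:(V,p)\to(\K,0)$ lifts to an analytic function germ $\tilde{g}:(\K^n,0)\to(\K,0)$ satisfying $\tilde{g}\circ\iota=g$.

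Next, apply Theorem \ref{generalhomeo} to the extension $\tilde{g}$ together with the single analytic set germ $\iota(V)\subset(\K^n,0)$. This yields a homeomorphism $\tilde{\sigma}:(\K^n,0)\to(\K^n,0)$ such that $\tilde{g}\circ\tilde{\sigma}$ is the germ of a polynomial $P\in\K[x_1,\ldots,x_n]$ and $W:=\tilde{\sigma}^{-1}(\iota(V))$ is the germ of an algebraic subset of $\K^n$.

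Now set $V_1:=W$, viewed as an affine algebraic variety in $\K^n$, put $p_1:=0$, and define $\sigma:=\iota^{-1}\circ\tilde{\sigma}|_{(V_1,p_1)}:(V_1,p_1)\to(V,p)$ and $g_1:=P|_{V_1}$. Then $\sigma$ is a homeomorphism of germs, being the restriction of a homeomorphism that maps $V_1$ bijectively onto $\iota(V)$, and $g_1$ is the germ of a polynomial function on the affine algebraic variety $V_1$. For $x\in V_1$ near $p_1$, the desired identity $g_1=g\circ\sigma$ follows from
\[
g_1(x)=P(x)=\tilde{g}(\tilde{\sigma}(x))=g(\iota^{-1}(\tilde{\sigma}(x)))=g(\sigma(x)),
\]
using that $\tilde{g}$ restricted to $\iota(V)$ coincides with $g\circ\iota^{-1}$.

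No substantial obstacle appears beyond this reduction, as the real content is already packaged in Theorem \ref{generalhomeo}. The only point worth noting is the lifting of $g$ to $\tilde{g}$, which is immediate from the defining property $\O_{V,p}=\O_{\K^n,0}/I$ of an embedded analytic space germ and is available uniformly in both the real and complex settings; no coherence or reducedness assumption on $(V,p)$ is needed.
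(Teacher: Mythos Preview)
Your argument is correct and is exactly the deduction the paper intends: the corollary is stated immediately after Theorem~\ref{generalhomeo} without proof, and the embedding-and-lifting step you wrote out is the straightforward way to derive it. There is nothing to add.
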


In Section \ref{examples} we present examples showing that Theorems \ref{homeotoalgebraic}, \ref{homeotopolynomial} and \ref{generalhomeo}  are false if we replace "homeomorphism" by "diffeomorphism".  
 We do not know whether these theorems hold true with "homeomorphism" replaced  
 by "bi-lipschitz homeomorphism". 

\begin{rem}
 We often identify the germ at the origin of a $\K$-analytic function $f:(\K^n,0) \to \K$ with its Taylor series that 
 is with a convergent power series.  We say that a ($\K$-)analytic function or a germ is \emph{Nash} if its graph is semi-algebraic.  Thus $f:(\K^n,0) \to \K$ is the germ of a Nash function if and only if its Taylor series 
is an algebraic power series.  A \emph{Nash set} is the zero set of a finitely many Nash functions.  
\end{rem}

%%%%%%%%%%%%%%%%%%%%%%%%%%%%%%%%%%%%%%%%%%%%%%%%%%%%%%
\section{Nested Artin-P\l oski Approximation Theorem}\label{nestedploski}

We set $x=(x_1,...,x_n)$ and $y=(y_1,...,y_m)$. The ring of convergent power series  in $x_1$,..., $x_n$ is denoted by $\K \{ x\}$. If $A$ is a commutative ring then the ring of algebraic  power series with coefficients in $A$ is denoted by $A\lg x\rg$.

The following result is a corollary of Theorem 11.4 \cite{Sp} which itself is a corollary of N\'eron-Popescu desingularization (see \cite{Po}, \cite{Sp} or \cite{Sw} for the proof of this desingularization theorem in whole generality or \cite{Qu} for a proof in characteristic zero).  

\begin{thm}\label{nest_ploski}
Let $f(x,y)\in \K \lg x\rg[y]^p$ and let $y(x)\in\K \{x\}^m$ be a solution of $f(x,y)=0$. Let us assume that $y_i(x)$ depends only on $(x_1,..., x_{\si(i)})$ where $i\lgm \si(i)$ is an increasing function. Then there exist a new set of variables $z=(z_1,..., z_s)$, an increasing function $\t$, convergent power series $z_i(x)\in\K \{x\}$ vanishing at 0 such that $z_1(x)$,..., $z_{\t(i)}(x)$ depend only on $(x_1,..., x_{\si(i)})$, and  a vector of algebraic power series $y(x,z)\in\K \lg x,z\rg^m$ solution of $f(x,y)=0$ such that for every $i$, $y_i(x,z)\in\K \lg x_1,...,x_{\si(i)},z_1,...,z_{\t(i)}\rg$, and $y(x)=y(x,z(x))$.
\end{thm}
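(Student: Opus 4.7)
The strategy is to derive the theorem from Spivakovsky's Theorem 11.4 in \cite{Sp}, a consequence of N\'eron--Popescu desingularization: since the morphism $A := \K\lg x \rg \hookrightarrow \K\{x\}$ is regular, $\K\{x\}$ is a filtered colimit of smooth $A$-algebras. The unnested case (i.e.\ ignoring the condition involving $\si$) is then immediate: the $A$-algebra morphism $A[Y]/(f(x,Y)) \to \K\{x\}$ sending $Y \mt y(x)$ factors through some smooth $A$-algebra $C$, and a standard smooth presentation $C = A[z_1,\ldots,z_s]/J$ with an appropriate Jacobian condition turns the images of the $Y_i$ into algebraic power series $y_i(x,z) \in \K\lg x,z \rg$ satisfying $f(x,y(x,z))=0$, while the structural morphism $C \to \K\{x\}$ supplies convergent $z_j(x) \in \K\{x\}$ realising $y(x) = y(x,z(x))$.

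\textbf{Installing the nested structure.} To realise the nested dependence, I would argue by induction on the distinct values $\si_1 < \si_2 < \cdots < \si_r$ attained by $\si$. Set $I_k := \{i : \si(i) \leq \si_k\}$, and partition the solution as $y(x) = (y^{(1)},\ldots,y^{(r)})$ with $y^{(k)}$ collecting the components with $\si(i) = \si_k$. At step $k$, assume inductively that algebraic expressions for $y^{(1)},\ldots,y^{(k-1)}$ have been produced using auxiliary variables $z_1,\ldots,z_{\t(k-1)}$ lying in $\K\{x_1,\ldots,x_{\si_{k-1}}\}$. Substituting these into $f$ yields a subsystem with coefficients in $\K\lg x_1,\ldots,x_{\si_k}, z_1,\ldots,z_{\t(k-1)}\rg$, which is solved over $\K\{x_1,\ldots,x_{\si_k}\}$ by the remaining components $y^{(k)}$. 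Applying the unnested construction to this subsystem, performed over the pair $\K\lg x_1,\ldots,x_{\si_k}\rg \hookrightarrow \K\{x_1,\ldots,x_{\si_k}\}$, yields the next batch of auxiliary variables $z_{\t(k-1)+1},\ldots,z_{\t(k)}$ as convergent power series in $(x_1,\ldots,x_{\si_k})$, together with algebraic expressions for the $y_i$ with $\si(i) = \si_k$ in $\K\lg x_1,\ldots,x_{\si_k}, z_1,\ldots,z_{\t(k)}\rg$.

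\textbf{Main obstacle.} The delicate point is to ensure that, at each step $k$, the smooth factorisation honestly lives over the shallower ring $\K\lg x_1,\ldots,x_{\si_k}\rg$ rather than over the full $\K\lg x\rg$, so that the new auxiliary variables depend only on $(x_1,\ldots,x_{\si_k})$. Invoking N\'eron--Popescu separately for each regular morphism $\K\lg x_1,\ldots,x_{\si_k}\rg \hookrightarrow \K\{x_1,\ldots,x_{\si_k}\}$ gives this \emph{a priori}, but one must check that the subsystem obtained after substituting the previously constructed algebraic expressions is genuinely defined over the shallower algebraic ring, i.e.\ that the earlier approximations do not re-introduce hidden dependence on deeper $x$-variables. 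This is guaranteed by the inductive hypothesis combined with the fact that substitution preserves the class of algebraic power series. Once this compatibility is verified, the three required conditions -- the nested dependence $y_i(x,z) \in \K\lg x_1,\ldots,x_{\si(i)},z_1,\ldots,z_{\t(i)} \rg$, the relation $f(x,y(x,z))=0$, and the factorisation $y(x)=y(x,z(x))$ -- follow directly from the construction.
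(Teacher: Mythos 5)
There is a genuine gap in the inductive step. You claim that after substituting the algebraic approximations $y^{(1)}(x,z),\dots,y^{(k-1)}(x,z)$ into $f$, one obtains ``a subsystem with coefficients in $\K\lg x_1,\ldots,x_{\si_k}, z_1,\ldots,z_{\t(k-1)}\rg$,'' and that the ``re-introduction'' of deeper $x$-variables is prevented by the inductive hypothesis plus stability of algebraic power series under substitution. But the coefficients of the original $f$ lie in $\K\lg x\rg[y]$, i.e.\ can depend on \emph{all} of $x_1,\dots,x_n$; substitution does not remove that dependence, so the residual system after step $k-1$ is still over $\K\lg x,z\rg$, not over the shallower ring $\K\lg x_1,\ldots,x_{\si_k}, z_1,\ldots,z_{\t(k-1)}\rg$. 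Relatedly, you cannot solve for the block $y^{(k)}$ in isolation over the pair $\K\lg x^{\si_k}\rg\hookrightarrow\K\{x^{\si_k}\}$: the equations in $f$ couple $y^{(k)}$ simultaneously with the deeper blocks $y^{(k+1)},\dots,y^{(r)}$, which do not live in $\K\{x^{\si_k}\}$. So the reduction to the unnested case at each level does not go through as written.

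The paper avoids both problems by encoding the nested data as the chain of finite-type algebras $C_i=\K\lg x^i\rg[y_1,\dots,y_{l(i)}]/J_i$, where $J_i$ is the \emph{full kernel} of $\K\lg x^i\rg[y_1,\dots,y_{l(i)}]\to\K\{x^i\}$ (not just the ideal generated by $f$), and then invoking Spivakovsky's Theorem 11.4 of \cite{Sp}. That theorem is a genuinely nested (filtered-diagram) strengthening of N\'eron--Popescu desingularization: it produces in one stroke a compatible tower of smooth algebras $D_i$ with $D_i$ smooth over $D_{i-1}\otimes_{A_{i-1}}A_i$, from which the nested algebraic $y_i(x,z)$ and the nested convergent $z_j(x)$ are read off after henselizing and identifying $D_i^h\cong\K\lg x^i,z^{\la(i)}\rg$ (this identification being where Lemmas \ref{lem} and \ref{lem2} are needed). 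If you want to bootstrap the nested statement from the unnested one by induction on depth levels, you would first need to reformulate each step in terms of the relation ideal $J_i$ rather than the fixed system $f$, and even then one must justify the compatibility of the successive smooth factorisations across levels -- which is essentially the content of the nested version of Popescu's theorem, not a formal consequence of the unnested one.
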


\begin{rem}
Theorem \ref{nest_ploski} remains valid if we replace  "convergent power series"  by  
"formal power series".
\end{rem}

\noindent For any $i$ we set:
$$A_i=\K \lg x_1,...,x_{i}\rg,$$
$$B_i=\K \{x_1,...,x_{i}\}.$$

\noindent We will need at several places the following two lemmas whose proofs are given later
(for the definition and properties of an excellent ring see 7.8 \cite{EGA2} or \cite{Ma}; a henselian local ring is a local ring satisfying the Implicit Function Theorem, see 18.5  \cite{EGA}).

\begin{lem} \label{lem} Let $B$ be an excellent  henselian local subring of $\K [[x_1,...,x_{i-1}]]$ containing $\K \lg x_1,...,x_{i-1}\rg$ and whose maximal ideal is generated by $x_1$,..., $x_{i-1}$. Then  the ring $A_i\otimes_{A_{i-1}}B$
is noetherian and its henselization is isomorphic to $B\lg x_i\rg$.
\end{lem}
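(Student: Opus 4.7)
Set $C := A_i \otimes_{A_{i-1}} B$. The plan is to build a natural homomorphism $\ph : C \to B\lg x_i\rg$ and to prove that (i) $C$ is noetherian and (ii) $\ph$ realizes $B\lg x_i\rg$ as the henselization of $C$ at a unique maximal ideal $\ma_C$. The map $\ph$ comes from the universal property of henselization: $A_i$ is the henselization of $E := A_{i-1}[x_i]_{(\ma_{A_{i-1}}, x_i)}$, and the composition $E \hookrightarrow B[x_i] \to B\lg x_i\rg$ extends uniquely to $A_i \to B\lg x_i\rg$ because $B\lg x_i\rg$ is henselian; this combines with the canonical $B \to B\lg x_i\rg$ (the two maps agreeing on $A_{i-1}$) and assembles into $\ph$.

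The first step is to show that $B$ is flat over $A_{i-1}$. From $\ma_B = (x_1, \ddots, x_{i-1})B$ together with $B \subset \K[[x_1, \ddots, x_{i-1}]]$, one deduces $\widehat B = \K[[x_1, \ddots, x_{i-1}]] = \widehat A_{i-1}$, so $B$ is a regular local ring of dimension $i-1$ whose maximal ideal has the same system of parameters as $\ma_{A_{i-1}}$; since the closed fibre $B/\ma_{A_{i-1}} B = \K$ has dimension zero, miracle flatness applies. Next, I would describe $A_i$ as a filtered colimit $\varinjlim_\alpha E_\alpha$ of local-étale $E$-subalgebras essentially of finite type. By flatness, $C = \varinjlim_\alpha (E_\alpha \otimes_{A_{i-1}} B)$ is a filtered colimit of étale extensions of $E \otimes_{A_{i-1}} B$, itself a localization of $B[x_i]$ sitting inside $B[x_i]_{(\ma_B, x_i)}$. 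The quotient $C/\ma_{A_{i-1}} C \cong \K\lg x_i\rg$ is local, so $C$ admits a unique maximal ideal $\ma_C$ containing $\ma_{A_{i-1}} C$, which $\ph$ sends to $\ma_{B\lg x_i\rg}$. After localizing the system $E_\alpha \otimes_{A_{i-1}} B$ at $\ma_C$, the resulting cofinal system is the standard one defining the henselization of $B[x_i]_{(\ma_B, x_i)}$, so the henselization of $C$ is $B\lg x_i\rg$.

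The main obstacle is the noetherianity of $C$, since a filtered colimit of noetherian rings is in general not noetherian; this is precisely where the excellence hypothesis on $B$ must enter. One route is to show that $\ph : C \to B\lg x_i\rg$ identifies $C$ (possibly after localization at $\ma_C$) with a localization of $B\lg x_i \rg$ at a multiplicatively-closed set, reducing noetherianity of $C$ to that of $B\lg x_i \rg$, which is the henselization of the noetherian local ring $B[x_i]_{(\ma_B, x_i)}$ and hence noetherian. A second route is to use excellence to guarantee $\widehat C = \widehat B[[x_i]] = \K[[x_1, \ddots, x_i]]$, and then to descend noetherianity via faithful flatness of the completion. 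Without excellence of $B$, the tensor product $A_i \otimes_{A_{i-1}} B$ would typically fail to be noetherian, so pinning down the correct use of this hypothesis is the delicate part of the proof.
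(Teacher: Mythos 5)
Your construction of $\ph$ and your identification of the henselization via a filtered colimit of \'etale extensions is a legitimate alternative to the paper's argument. The paper instead builds $\psi\colon A_i\otimes_{A_{i-1}}B\to B\lg x_i\rg$ directly (sending $a\otimes b$ to $ab$), proves $\psi$ injective by a linear-algebra argument over the flat extension $A_{i-1}\to B$, observes that $B[x_i]\subset\Im\psi$ so that the induced map from the henselization of $A_i\otimes_{A_{i-1}}B$ to $B\lg x_i\rg$ is surjective (here Lemma \ref{lem2} enters, to identify $B\lg x_i\rg$ with the henselization of $B[x_i]_{(x_1,\dots,x_i)}$), and then verifies that this surjection is injective using the algebraicity of elements of the henselization over the domain $A_i\otimes_{A_{i-1}}B$. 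Your colimit route bypasses the injectivity computation, but you should justify that the system $E_\alpha\otimes_{A_{i-1}}B$, suitably localized, is cofinal among \'etale neighborhoods of $B[x_i]_{(\ma_B,x_i)}$, i.e.\ that henselizing the colimit again does not enlarge it; this is true but not automatic.

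The real gap is noetherianity, which you leave unresolved, and you also misattribute the role of excellence. Excellence of $B$ is not there to control noetherianity of $C:=A_i\otimes_{A_{i-1}}B$; it enters only through Lemma \ref{lem2}, via Artin approximation (Theorem 11.3 of \cite{Sp}), to show that $B\lg x_i\rg$ is precisely the henselization of $B[x_i]_{(x_1,\dots,x_i)}$. Once the henselization of $C$ (at its natural maximal ideal) is identified with $B\lg x_i\rg$, which is noetherian because it is the henselization of the noetherian ring $B[x_i]_{(x_1,\dots,x_i)}$, noetherianity of $C$ follows from \cite{EGA} Th\'eor\`eme 18.6.6: a local ring is noetherian if and only if its henselization is, the key point being that the henselization map is always faithfully flat, even without a noetherianity hypothesis, so noetherianity descends. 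Neither of your two proposed fixes works as stated: the first has the containment backwards, since $B\lg x_i\rg$ is an ind-\'etale, strictly larger, extension of $C_{\ma_C}$ rather than $C_{\ma_C}$ being a localization of $B\lg x_i\rg$; and the second is circular, since you cannot invoke faithful flatness of the $\ma_C$-adic completion of $C$ before knowing that $C$ is noetherian.
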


\begin{lem}\label{lem2}
Let $B$ be an excellent henselian local subring of $\K [[x_1,...,x_{i-1}]]$ containing $\K \lg x_1,...,x_{i-1}\rg$ and whose maximal ideal is generated by $x_1$,..., $x_{i-1}$. Let $I$ be an ideal of $B[x_i]$. Then the henselization of $\frac{B[x_i]_{(x_1,...,x_i)}}{I}$ is isomorphic to $\frac{B\lg x_i\rg}{I}$. 
\end{lem}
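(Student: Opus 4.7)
My plan is to reduce Lemma \ref{lem2} to Lemma \ref{lem} together with the standard compatibility of henselization with quotients by ideals. The whole proof is in two steps.

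First, I would establish that the local ring $R:=B[x_i]_{(x_1,\dots,x_i)}$ already has henselization isomorphic to $B\lg x_i\rg$. On the one hand, $R$ is a local subring of $A_i\otimes_{A_{i-1}}B$, whose henselization is $B\lg x_i\rg$ by Lemma \ref{lem}; so by functoriality one obtains a local map $R^h\to B\lg x_i\rg$. On the other hand, every element of $B\lg x_i\rg$ is algebraic over $R$, so $B\lg x_i\rg$ is built from $R$ by successive finite \'etale extensions, and it is henselian (because $B$ is henselian and algebraic power series satisfy Hensel's lemma in the variable $x_i$). The universal property of henselization therefore identifies $R^h$ with $B\lg x_i\rg$. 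Equivalently, one may cite the classical fact that when $B$ is henselian local, the henselization of $B[x]$ at the maximal ideal $(\ma_B,x)$ is precisely the ring $B\lg x\rg$ of algebraic power series.

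Second, I would invoke the general fact that henselization commutes with quotients by ideals: for any local noetherian ring $R$ and any ideal $I\subseteq R$, there is a canonical isomorphism $(R/I)^h\cong R^h/IR^h$. Applying this to $R=B[x_i]_{(x_1,\dots,x_i)}$ and to the given $I$ (extended to $R$) yields
$$\left(\frac{B[x_i]_{(x_1,\dots,x_i)}}{I}\right)^h\cong \frac{R^h}{IR^h}\cong \frac{B\lg x_i\rg}{IB\lg x_i\rg},$$
which, after identifying $I$ with its image in $B\lg x_i\rg$, is the claimed isomorphism.

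The only non-trivial point is the first step. The subtlety lies in verifying both that $B\lg x_i\rg$ is henselian and that it is not strictly larger than $R^h$ (i.e.\ that every algebraic power series is reachable by finite \'etale extensions of $R$). The noetherianness required for the quotient manipulation in Step 2 is supplied by the excellence of $B$ via Lemma \ref{lem}, so no separate hypothesis needs to be imposed.
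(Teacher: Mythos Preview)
Your Step~2 is fine and, if anything, cleaner than the paper's hand-made inclusion argument: the identity $(R/I)^h\cong R^h/IR^h$ is standard. The problem is entirely in Step~1.

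First, you cannot invoke Lemma~\ref{lem} here. In the paper the proof of Lemma~\ref{lem} explicitly relies on Lemma~\ref{lem2} (to identify the henselization of $B[x_i]$ with $B\lg x_i\rg$), so using Lemma~\ref{lem} to establish Lemma~\ref{lem2} is circular.

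Second, your alternative route to $R^h\cong B\lg x_i\rg$ has a genuine gap. You write that ``every element of $B\lg x_i\rg$ is algebraic over $R$, so $B\lg x_i\rg$ is built from $R$ by successive finite \'etale extensions''. Algebraic over $R$ does not imply reachable by \'etale extensions of $R$; in general the algebraic closure of a local ring inside its completion can be strictly larger than its henselization. The equality $R^h=\{y\in\wdh R: y \text{ algebraic over }R\}$ is exactly Artin approximation, and it requires $R$ (equivalently $B$) to be excellent. This is precisely what the paper does: given $y\in B\lg x_i\rg$ a root of $P(Y)\in B[x_i][Y]$, it applies Artin approximation (Theorem~11.3 of \cite{Sp}) to approximate $y$ by elements of $R^h$, then uses finiteness of the root set to conclude $y\in R^h$. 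Your proposal never invokes excellence directly, and the ``classical fact'' you cite \emph{is} the $I=0$ case of the lemma---it is not available for free under the paper's definition of $B\lg x_i\rg$ as the ring of algebraic power series.

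In short: keep your Step~2, but replace Step~1 by the paper's Artin-approximation argument (or an equivalent argument that actually uses the excellence of $B$).
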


\begin{proof}[Proof of Theorem \ref{nest_ploski}]
By replacing $f(x,y)$ by $f(x,y(0)+y)$ we may assume that $y(0)=0$.

For any $i$ let $l(i)$ be the largest integer such that $y_1(x)$,..., $y_{l(i)}(x)\in\K \{x_1,...,x_i\}$.\\
For any $i$ let $J_i$ be the kernel of the morphism
$$\phi_i : \K \lg x_1,...,x_i\rg[y_1,...,y_{l(i)}]\lgw \K \{x_1,...,x_i\}=B_i$$
defined by $\phi_i(g(x,y))=g(x,y(x))$. We define:
$$C_i= \frac{\K \lg x_1,...,x_i\rg[y_1,...,y_{l(i)}]}{J_i}.$$
Then $C_i$ is a finite type $A_i$-algebra and $C_i$ is a sub-$A_i$-algebra of $C_{i+1}$ since $J_{i}\subset J_{i+1}$.  The morphism $\phi_i$ induces a morphism $C_i\lgw B_i$ such that the following diagram is commutative:

$$\xymatrix{A_1 \ar[d] \ar[r] & A_2 \ar[d] \ar[r] & \cdots \ar[r] \ar[d] & A_n\ar[d]\\
C_1  \ar[d] \ar[r] & C_2 \ar[d] \ar[r] & \cdots  \ar[r] \ar[d] & C_n\ar[d]\\
B_1 \ar[r] & B_2  \ar[r] & \cdots \ar[r] & B_n}$$
By Theorem 11.4 \cite{Sp} (see also \cite{Te}) this diagram may be extended to a commutative diagram as follows
$$\xymatrix{A_1 \ar[d] \ar[r] & A_2 \ar[d] \ar[r] & \cdots \ar[r] \ar[d] & A_n\ar[d]\\
C_1  \ar[d] \ar[r] & C_2 \ar[d] \ar[r] & \cdots   \ar[d] \ar[r] & C_n\ar[d]\\
D_1 \ar[d] \ar[r] & D_2 \ar[d] \ar[r] & \cdots \ar[d]  \ar[r] & D_n\ar[d]\\
B_1 \ar[r] & B_2  \ar[r] & \cdots  \ar[r] & B_n}$$
where $D_1$ is a smooth $A_1$-algebra of finite type and $D_i$ is a smooth $D_{i-1}\otimes_{A_{i-1}}A_i$-algebra of finite type for all $i>1$. We will denote by $D'_{i-1}$ the ring $D_{i-1}\otimes_{A_{i-1}}A_i$ for all $i>1$ and set $D'_1=A_1$.

For any $i$ let us write $D_i=\frac{D'_{i-1}[u_{i,1},...,u_{i,q_i}]}{I_i}$. 
We may make a change of coordinates (of the form $u_{i,j}\longmapsto u_{i,j}+c_{i,j}$ for some $c_{i,j}\in \K$) in such way that the image of $u_{i,j}$ is in the maximal ideal of $B_i$ for any $i$ and $j$. Thus $D_i\lgw B_i$ factors through  the localization morphism $D_{i}\lgw (D_i)_{\ma_i}$ where $\ma_i=(x_1,...,x_i,u_{i,1},...,u_{i,q_i})$.   Let $D_i^h$ be the henselization of $(D_i)_{\ma_i}$. Since $B_i$ is a henselian local ring, the morphism $D_i\lgw B_i$ factors through $D_i^h$ by the universal property of the 
henselization. Still by this universal property the composition of the morphisms $D_{i-1}\lgw D_i\lgw D_i^h$ factors through $D_{i-1}^h$. Thus we have the following commutative diagram: 
$$\xymatrix{A_1 \ar[d] \ar[r] & A_2 \ar[d] \ar[r] & \cdots \ar[r] \ar[d] & A_n\ar[d]\\
C_1  \ar[d] \ar[r] & C_2 \ar[d] \ar[r] & \cdots   \ar[d] \ar[r] & C_n\ar[d]\\
D_1^h \ar[d] \ar[r] & D_2^h \ar[d] \ar[r] & \cdots \ar[d]  \ar[r] & D_n^h\ar[d]\\
B_1 \ar[r] & B_2  \ar[r] & \cdots  \ar[r] & B_n}$$
We will prove by induction that $D_i^h$ is isomorphic to $\K \lg x_1,...,x_i,z_1,...,z_{\la(i)}\rg$ where $i\lgw \la(i)$ is an increasing function and the $z_k$ are new indeterminates.

Since $D_1^h$ is the henselisation of $(D_1)_{\ma_1}=\frac{\K \lg x_1\rg[u_{1,1},...,u_{1,q_1}]}{I}_{(x_1,u_{1,1},...,u_{1,q_1})}$, $D_1^h$ is isomorphic to $\frac{\K \lg x_1,u_{1,1},...,u_{1,q_1}\rg}{I.\K \lg x_1,u_{1,1},...,u_{1,q_1}\rg}$ by Lemma \ref{lem2} and $D_1$ being smooth over $\K \lg x_1\rg$ means that the matrix $\left(\frac{\partial f_j}{\partial u_k}(0,0)\right)_{i,j}$, where the $f_j$ are generators of $I.\K \lg x_1,u_{1,1},...,u_{1,q_1}\rg$, has maximal rank (by the jacobian criterion for smoothness, see Proposition  22.6.7 (iii) \cite{EGA1}). Thus by the Implicit Function Theorem the ring $D_1^h$ is isomorphic to $\K \lg x_1,z_1,...,z_{\la(1)}\rg$ for some new indeterminates $z_1$,..., $z_{\la(1)}$. This proves the induction propery for $D_1^h$.

Now let us assume that the induction property  is true for $D_{i-1}^h$. By assumption $D_i$ is smooth over $D_{i-1}\otimes_{A_{i-1}}A_i$. Thus $D_i^h$ is smooth over the henselization of $D_{i-1}\otimes_{A_{i-1}}A_i$. 
By the universal property of the henselization the morphism from $D_{i-1}$ to the henselization of $D_{i-1}\otimes_{A_{i-1}}A_i$ factors through $D_{i-1}^h$ thus it factors through $D_{i-1}^h\otimes_{A_{i-1}}A_i$. Hence the henselization of $D_{i-1}\otimes_{A_{i-1}}A_i$ is isomorphic to the henselization of $D_{i-1}^h\otimes_{A_{i-1}}A_i$. 
But 
$$D_{i-1}^h\otimes_{A_{i-1}}A_i=\K \lg x_1,...,x_{i-1},z_1,...,z_{\la(i-1)}\rg\otimes_{\K \lg x_1,...,x_{i-1}\rg}\K \lg x_1,...,x_i\rg.$$
Its henselization is isomorphic to $\K \lg x_1,...,x_{i},z_1,...,z_{\la(i-1)}\rg$ by Lemma \ref{lem}.
 This shows that $D_i^h$ is smooth over $\K \lg x_1,...,x_{i},z_1,...,z_{\la(i-1)}\rg$ hence, by the Implicit Function Theorem as we did for $D_1^h$, $D_i^h$ is isomorphic to $\K \lg x_1,...,x_{i},z_1,...,z_{\la(i)}\rg$ for some new indeterminates $z_{\la(i-1)+1}$,..., $z_{\la(i)}$.
 
 Finally the morphisms $C_i\lgw D_i^h$ define the $y_k(x,z)$ satisfying $f(x,y(x,z))=0$. The power series $z_j(x)$ are defined by the morphisms $D_i^h\lgw B_i$ and the fact that $C_i\lgw B_i$ factors through $D_i^h$ yields $y(x)=y(x,z(x))$. 
\end{proof}

\medskip
\begin{proof}[Proof of Lemma \ref{lem}]
Let $\psi: A_i\otimes_{A_{i-1}}B\lgw B\lg x_i\rg$ be the morphism defined by 
$\psi(\sum_ja_j\otimes b_j)=\sum_j a_jb_j$ with $a_j\in A_i$ and $b_j\in B$ for any $j$. The morphism $\psi$ is well defined since $A_i$ and $B$ are subrings of the ring $B\lg x_i\rg$. The image of $\psi$ is the subring of $B\lg x_i\rg$ generated by $A_i$ and $B$.\\
Let us prove that $\psi$ is injective: Let $\sum_ja_j\otimes b_j\in \Ker(\psi)$ with $a_j\in A_i$ and $b_j\in B$ for any $j$. This means that $\sum_ja_jb_j=0$. Let us write $a_j=\sum_{l\in\N}a_{j,l}x_i^l$ where $a_{j,l}\in A_{i-1}$ for any $j$ and $l$. Thus we have 
\begin{equation}\label{eq1}\sum_ja_{j,l}b_j=0\end{equation} for any $l\in\N$ and this system of linear equations is equivalent to a finite system by noetherianity. The ring extension $A_{i-1} \lgw B$ is flat since $A_{i-1}\lgw \K [[x_1,...,x_{i-1}]]$ and  $B\lgw \K [[x_1,...,x_{i-1}]]$ are faithfully flat (they are completions of  local noetherian rings, cf. \cite{Ma} p. 46 and Theorem 8.14 p. 62). Thus the solution vector $(b_j)_j$ of \eqref{eq1}  is a linear combination with coefficients in $B$ of solution vectors in $A_{i-1}$ (cf. \cite{Ma} Theorem 7.6 p.49). Thus $(b_j)_j=\sum_k b'_k(a'_{j,k})_j$ where $b_k'\in B$  and, for any $k$, $(a'_{j,k})_j$ are vectors with entries in $A_{i-1}$ which are solutions of \eqref{eq1}.  This means that $$\sum_ja_j\otimes b_j=\sum_{j,k}a_j\otimes b_k'a'_{j,k}=\sum_k\sum_ja_ja'_{j,k}\otimes b'_k=\sum_k(\sum_l(\sum_ja_{j,l}a'_{j,k})x_i^l)\otimes b'_k=0.$$
Thus $\Ker(\psi)=(0)$.

Obviously $\Im(\psi)$ contains $B[x_i]$ whose henselization is $B\lg x_i\rg$ by Lemma \ref{lem2}, thus $\psi$ induces a surjective  morphism between the henselization of $A_{i}\otimes_{A_{i-1}}B$ and  $B\lg x_i\rg$. This surjective morphism is also injective since $\psi$ is injective and $A_{i}\otimes_{A_{i-1}}B$ is a domain (Indeed if $y\neq 0$ is in the henselization of $A_{i}\otimes_{A_{i-1}}B$, then $y$ is a root of a non zero polynomial with coefficients in $A_{i}\otimes_{A_{i-1}}B$. Since $A_{i}\otimes_{A_{i-1}}B$ is a domain and $y\neq 0$ we may assume that this polynomial has a non zero constant term denoted by $a$. If the image of $y$ in $B\lg x_i\rg$ is zero then $\psi(a)=0$ which is a contradiction).\\
On the other hand $B\lg x_i\rg$ is the henselization of $B[x_i]$ which is noetherian, thus $B\lg x_i\rg$ is noetherian (cf. \cite{EGA} Th\'eor\`eme 18.6.6.). This proves that the henselization of $A_{i}\otimes_{A_{i-1}}B$ is noetherian. Hence $A_{i}\otimes_{A_{i-1}}B$ is noetherian (cf. \cite{EGA} Th\'eor\`eme 18.6.6.).
\end{proof}
\medskip

\begin{proof}[Proof of Lemma \ref{lem2}]
The elements of the henselization of  a local ring $A$ are algebraic over $A$ by construction. Thus  the henselization of $\frac{B[x_i]}{I}$ is a subring of $\frac{B\lg x_i\rg}{I}$. \\
On the other hand let us prove first that $B\lg x_i\rg$ is the henselization of $B[x_i]_{(x_1,...,x_i)}$. If $y\in   B\lg x_i\rg$, then $y$ is a root  of a polynomial $P(Y)$ with coefficients in $B[x_i]$. By Artin approximation Theorem (see Theorem 11.3 \cite{Sp}), $y$ may be approximated by elements which are in the henselization of $B[x_i]$. Since $P(Y)$ has only a finite number of roots, this means that $y$ is in the henselization of $B[x_i]_{(x_1,...,x_i)}$. Thus $B\lg x_i\rg$ is the henselization of $B[x_i]_{(x_1,...,x_i)}$. Now the morphism $B[x_i]\lgw \frac{B[x_i]}{I}$ induces a morphism $B\lg x_i\rg\lgw \left(\frac{B[x_i]}{I}_{(x_1,...,x_i)}\right)^h$ of $B[x_i]$-algebras by the universal property of the henselization. It is clear that the kernel of this morphism is generated by $I$ thus we get an injective morphism $\frac{B\lg x_i\rg}{I}\lgw \left(\frac{B[x_i]}{I}_{(x_1,...,x_i)}\right)^h$ of $B[x_i]$-algebras. Since $\left(\frac{B[x_i]}{I}_{(x_1,...,x_i)}\right)^h$ is a subring of  $\frac{B\lg x_i\rg}{I}$, this shows that the morphism $\frac{B\lg x_i\rg}{I}\lgw \left(\frac{B[x_i]}{I}_{(x_1,...,x_i)}\right)^h$ is an isomorphism.
\end{proof}

%%%%%%%%%%%%%%%%%%%%%%%%%%%%%%%%%%%%%%%%%%%%%%%%%%%%%%%%%%%%%%%%%%%%%%%%%

\section{Algebraic Equisingularity of Zariski}\label{varchenko}
Notation:  Let $x=(x_1, \ldots, x_n) \in \C^n$. Then we denote $x^i = (x_1, \ldots, x_i) \in \C^i$.

%%%%%%%%%%%%%%%%%%%%%%%%%%%%%%%%%%%%%%%%%%%%%%%%%%%%%%
\subsection{Assumptions} \label{pseudopolynomials}
Let $V$ be an analytic hypersurface of a neighborhood of the origin in
$\C^l \times  \C^n$ and let $W= V\cap (\C ^l \times  \{0\} )$.  
 Suppose there are given complex pseudopolynomials 
 \begin{align*}%\label{polynomiais:F_i}
F_{i} (t, x^i )= x_i^{p_i}+ \sum_{j=1}^{p_i} a_{i-1,j} (t,x^{i-1})
 x_i^{p_i-j}, \qquad    i=0, ... ,n,
\end{align*}
$t\in \C ^ l$, $x^i \in \C ^i$, 
with complex analytic coefficients $a_{i-1,j}$, that satisfy 
\begin{enumerate}
\item 
$V=F_n^ {-1} (0)$.
\item
$F_{i-1} (t, x^{i-1})=0$  if and only if $F_{i} (t, x^{i-1}, x_i)=0$ considered as an equation on $x_i$ with 
$(t, x^{i-1})$ fixed, has fewer roots than for generic $(t, x^{i-1})$. 
\item
$F_0\equiv 1$. 
\item
There are positive reals $\delta_k>0$, $k=1, \ldots, l$, and $\varepsilon _j> 0$, $j=1, \ldots, n$,  such that $F_i$ are defined on the polydiscs $U_i:= \{ |t_k|< \delta_k, |x_j|< \varepsilon _j , k=1, \ldots,l,   j=1, \ldots, i \}$.
\item
All roots of $F_{i} (t, x^{i-1}, x_i)=0$, for $(t,x^{i-1}) \in U_{i-1}$,  lie inside the circle of radius $\varepsilon _i$.  
\item
Either $F_i(t,0)\equiv 0$ or $F_i \equiv 1$ (and in the latter case 
$F_k \equiv 1$ for all $k\le i$).
\end{enumerate}

We may take as $F_{i-1}$ the Weierstrass polynomial associated to the reduced discriminant of $F_{i}$   
or a generalized discriminant (see the next section).  

We shall denote $V_i=F_i ^{-1} (0) \subset U_i$.  
For the parameter $t$ fixed we write $V_t := V\cap (\{t\}\times \C^n)$,  $V_{i,t} := V_i\cap (\{t\}\times \C^i)$, and $U_{i,t} = U_i 
\cap (\{t\}\times \C^i)$.  We identify $W$ and $U_0$.

\begin{thm}\label{theorem} {\rm (\cite{varchenko1973} Theorem 1, \cite{varchenkoICM} Theorem 1)}
Under the above assumptions 
$V$ is topologically equisingular along $W$ with respect to the family of sections $V_t = V\cap (\{t\}\times \C^n)$. 
 This means that for all $t\in W$ there is a homeomorphism 
 $h_t : U_{n,0} \to U_{n,t}$ such that  $h_t (V_0)= V_t$ and $h_t(0)=0$. 
\end{thm}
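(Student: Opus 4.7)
My plan is to prove the theorem by induction on $n$, building a compatible tower of homeomorphisms $h_t^{(i)} : U_{i,0} \to U_{i,t}$ for $i = 0, 1, \ldots, n$ with $h_t^{(i)}(V_{j,0}) = V_{j,t}$ for all $j \le i$, with $h_t^{(i)}(0) = 0$, and such that $h_t^{(i)}$ projects to $h_t^{(i-1)}$ under the natural map $U_i \to U_{i-1}$, depending continuously on $t \in W$. Then $h_t := h_t^{(n)}$ is the desired homeomorphism. For $i = 0$ the set $U_{0,0}$ is a single point and the construction is trivial.

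For the inductive step $i-1 \to i$, I use a fiber-wise construction in the last coordinate $x_i$. The pseudopolynomial $F_i(t, x^{i-1}, x_i)$ is monic of degree $p_i$ in $x_i$ and, by condition (5), has all its roots in the disc $D_i := \{|x_i| < \varepsilon_i\}$. For each fixed $(t, x^{i-1})$ let $R(t, x^{i-1}) \subset D_i$ denote the multiset of these roots. Condition (2) says that $R(t, x^{i-1})$ has fewer than the generic number of distinct elements exactly on $V_{i-1}$, while the inductive hypothesis sends $V_{i-1,0}$ to $V_{i-1,t}$. The inductive step consists in building a homeomorphism $\phi_{t, x^{i-1}} : D_i \to D_i$ carrying $R(0, x^{i-1})$ onto $R(t, h_t^{(i-1)}(x^{i-1}))$ with matching multiplicities, depending continuously on $(t, x^{i-1})$, and then setting
\[ h_t^{(i)}(x^{i-1}, x_i) := \bigl(h_t^{(i-1)}(x^{i-1}),\; \phi_{t, x^{i-1}}(x_i)\bigr). \]

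To construct $\phi$, off the discriminant locus $V_{i-1}$ the $p_i$ roots of $F_i$ are distinct analytic functions of the parameters, and a local labelling yields a local bijection between $R(0, x^{i-1})$ and $R(t, h_t^{(i-1)}(x^{i-1}))$. This bijection is extended to a homeomorphism of the disc $D_i$ by a canonical topological recipe, for instance by triangulating $D_i$ with vertices at the roots together with a finite set of boundary points and interpolating piecewise-linearly, or by a radial extension in the complement of the root set. The local choices of labelling glue into a well-defined continuous family because the inductive hypothesis $h_t^{(i-1)}$ preserves the monodromy structure of the root coverings on either side of $V_{i-1}$.

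The heart of the argument, and the main obstacle, is proving continuity of $\phi_{t, x^{i-1}}$ as $(t, x^{i-1})$ approaches the discriminant $V_{i-1}$, where distinct roots coalesce. This is precisely where the Zariski structure of the pseudopolynomial chain $(F_i)_{0 \le i \le n}$ is essential: the clustering pattern of the roots of $F_i$ over a point of $V_{i-1}$ is encoded by the Puiseux expansions along the strata of $V_{i-1}$, and the inductive equisingularity supplied by $h_t^{(i-1)}$ transports this clustering faithfully from the slice $t = 0$ to the slice $t$. Consequently the canonical extension of the root bijection admits a continuous limit across $V_{i-1}$, and the resulting $\phi$ depends continuously on $(t, x^{i-1})$ on all of $U_{i-1}$. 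Condition (6) finally ensures that $0$ is always a root of the univariate polynomial $F_i(t, 0, \ldots, 0, x_i)$ (or that the induction is vacuous because $F_i \equiv 1$), so that the origin is preserved by the root matching and we obtain $h_t^{(i)}(0) = 0$, closing the induction.
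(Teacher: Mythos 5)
The paper does not prove this theorem; it is quoted from Varchenko (\cite{varchenko1973}, \cite{varchenkoICM}), and Subsection~\ref{rmk_Varchenko} merely recalls the shape of the argument: $h_t$ is obtained by inductively lifting fibred homeomorphisms $h_{i,t}$ up the tower of pseudopolynomials, exactly as you describe. So your overall plan --- induction on $i$, a fibrewise homeomorphism of the $x_i$-disc carrying the roots of $F_i(0,\cdot)$ to those of $F_i(t,\cdot)$, stacked over $h_t^{(i-1)}$ --- does match the strategy the paper is invoking.

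But as a proof your proposal has a genuine gap, concentrated precisely where you flag ``the heart of the argument.'' First, the root matching is not canonical: off $V_{i-1}$ the roots of $F_i$ are only \emph{locally} analytic in the parameters, a global labelling over a component of $U_{i-1,0}\setminus V_{i-1,0}$ need not exist because the monodromy can permute the roots, and matching the root set at $(0,x^{i-1})$ with the one at $(t,h_t^{(i-1)}(x^{i-1}))$ requires transport along a path in parameter space together with an argument that the result is path-independent. You assert that $h_t^{(i-1)}$ ``preserves the monodromy structure,'' but that is not a consequence of $h_t^{(i-1)}(V_{i-1,0})=V_{i-1,t}$ alone --- a set-preserving homeomorphism need not respect the covering structure of the roots --- so it is part of what the induction must carry. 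Second, and more seriously, the continuity of $\phi_{t,x^{i-1}}$ as $(t,x^{i-1})$ approaches $V_{i-1}$ is asserted, not proved: neither of the ``canonical recipes'' you name (piecewise-linear interpolation on a triangulation with vertices at the roots, or radial extension off the root set) varies continuously in the parameters when roots coalesce --- the triangulation's combinatorics jump and the radial extension degenerates. Controlling exactly this is the substance of Varchenko's published proof \cite{varchenko1972}, which replaces such recipes with a careful inductive analysis of root clusters and explicit uniform estimates; invoking ``the Zariski structure'' and ``Puiseux expansions'' gestures at the relevant phenomena without extracting the continuity statement you need. As written, your argument is a faithful high-level outline of Varchenko's construction, not a proof of it.
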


%%%%%%%%%%%%%%%%%%%%%%%%%%%%%%%%%%%%%%%%%%%%%%%%%%%%%%
\subsection{Remarks on Varchenko's proof of Theorem \ref{theorem}. } \label{rmk_Varchenko}

As Varchenko states in Remark 1 of \cite{varchenko1973} a stronger result holds, 
the family $V_t$ is topologically trivial, in the sense that the homeomorphisms $h_t$ depend continuously on 
$t$.  The details of the proof of 
Theorem \ref{theorem}  (with continuous dependence of $h_t$ on $t$) are published in  \cite{varchenko1972}.   
%(strictly speaking the proof in \cite{varchenko1972} is in the global polynomial case but it 
%can be adapted easily to the local pseudopolynomial case).  

The homeomorphisms $h_t$ are constructed in  \cite{varchenko1972}  inductively by lifting step by step the homeomorphisms 
$$
 h_{i,t} : U_{i,0}  \to U_{i,t} ,  
 $$
 so that $ h_{i,t} (x^{i-1}, x_i) = ( h_{i-1,t} (x^{i-1}),  h_{i,t,i}  (x^{i})) $, $ h_{i,t}(V_{i,0})= V_{i,t}$, $ h_{i,t}(0)= 0$.     If $h_{i-1,t}$ depends continuously on $t$, then the number of  roots  of 
 $F_{i} (h_{i-1,t} (x^{i-1}), x_i)=0$  is independent of $t$.   Therefore, if $F_n= G_1 \cdots G_k$, then the 
 number of roots of each  $G_{j} (h_{n-1,t} (x^{n-1}), x_n)=0$ is independent of $t$, see Lemma 2.2 of  \cite{varchenko1972}.  In particular $h_t $ preserves not only $V=F_n ^{-1}(0)$ but also each of $G_j ^{-1}(0)$.   
Thus  \cite{varchenko1972}  implies the following. 
 
 \begin{thm}\label{theorem2} 
The homeomorphisms $h_t$ of Theorem \ref{theorem} can be chosen continuous in $t$.  If $F_n= G_1 \cdots G_k$ then for each $s=1, \ldots , k$, 
$h_t (G_s^ {-1} (0) \cap  (\{0\}\times \C^n) )=G_s^ {-1} (0) \cap  (\{t\}\times \C^n)$. 
\end{thm}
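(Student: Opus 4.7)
Both assertions are essentially already contained in Varchenko's construction; the task is to extract them from the inductive argument of \cite{varchenko1972}. The first assertion---that $h_t$ can be chosen continuous in $t$---is Remark~1 of \cite{varchenko1973} and is proved in detail in \cite{varchenko1972}. I would recall the construction: one builds $h_{i,t}$ from $h_{i-1,t}$ by choosing a continuous parametrization of the roots of $F_i(h_{i-1,t}(x^{i-1}), x_i)=0$ over the complement of its branch locus and then extending/interpolating to all of $U_{i,t}$. Since the roots of a pseudopolynomial depend continuously on its coefficients off the discriminantal locus, and both the parametrization and the interpolation are chosen continuously in the parameter, $h_{i,t}$ depends continuously on $t$, and hence so does $h_t = h_{n,t}$.

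For the second assertion, the key observation is already spelled out in Subsection \ref{rmk_Varchenko}: if $h_{n-1,t}$ is continuous in $t$, then by Lemma~2.2 of \cite{varchenko1972} the number of roots of $G_j(h_{n-1,t}(x^{n-1}), x_n)=0$ is locally constant in $t$ for each factor $G_j$, not merely for the full product $F_n = G_1 \cdots G_k$. Thus one can track each factor's bundle of roots as $t$ varies. In lifting $h_{n-1,t}$ to $h_{n,t}$ one uses a parametrization of the roots of $F_n(h_{n-1,t}(x^{n-1}), x_n)=0$ that respects the partition of these roots according to which $G_j$ produces them. The resulting $h_{n,t}$ then sends $G_s^{-1}(0)\cap (\{0\}\times \C^n)$ onto $G_s^{-1}(0)\cap (\{t\}\times \C^n)$ for each $s$ simultaneously, without disturbing the already-verified property $h_{n,t}(V_0) = V_t$.

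The main subtlety, and the step I expect to take the most care to justify, is the global consistency of the root-matching along the locus where roots of distinct factors $G_j$ collide. Such a collision forces the value of $(t,x^{n-1})$ to lie on the discriminant of $F_n$, which by construction (assumption (2) of Section \ref{pseudopolynomials}) is absorbed into $F_{n-1}$; hence the locus of possible collisions is already encoded in $V_{n-1}$ and is thereby mapped to itself by $h_{n-1,t}$. Once this is in place, the pancake/radial extension used by Varchenko can be performed separately near each sheet of roots of a fixed $G_j$, and the compatibility of the factor-preserving $h_{n,t}$ follows by continuity from the generic case where the roots of the $G_j$ are disjoint.
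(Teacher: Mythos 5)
Your proposal is correct and follows essentially the same route as the paper: continuity in $t$ is delegated to Varchenko's Remark~1 (with details in \cite{varchenko1972}), and the factor-preservation is deduced from Lemma~2.2 of \cite{varchenko1972}, which guarantees the root count of each $G_j(h_{n-1,t}(x^{n-1}),x_n)=0$ is constant in $t$, so the continuous root-matching that defines the lift $h_{n,t}$ respects the partition of roots by factor. Your extra observation about collisions between roots of distinct factors being absorbed into the discriminant locus, hence preserved by $h_{n-1,t}$, is a correct and useful elaboration but does not change the strategy.
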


%%%%%%%%%%%%%%%%%%%%%%%%%%%%%%%%%%%%%%%%%%%%%%%%%%%%%%
\section{Mostowski's Theorem.}\label{mostowskisection}

In this section we show Theorem \ref{homeotoalgebraic}

\subsection{Generalized discriminants}
Let $f(T) = T^p + \sum_{j=1}^ p a_iT^{p-i} = \prod_{j=1}^ p (T-T_i)$.  Then the expressions 
$$
 \sum_{r_1, \ldots r_{j-1}} \,  \prod_{k< l, k,l \ne r_1, \ldots r_{j-1}}  (T_k-T_l)^2
$$
are symmetric in $T_1, \ldots , T_p$ and hence polynomials in $a=(a_1, \ldots, a_{p}) $.  We denote 
these polynomials by $\Delta_j (a)$.  
Thus $\Delta_1$ is the standard discriminant and $f$ has exactly $p-j$ distinct roots if and only if 
$\Delta_1=\cdots = \Delta _{j}=0$ and $\Delta_{j+1}\neq 0$.

%%%%%%%%%%%%%%%%%%%%%%%%%%%%%%%%%%%%%%

\subsection{Construction of a normal system of equations}
 
 Let be given a finite set of  pseudopolynomials $g_1, \ldots, g_k \in\C\{x\}$:  
   \begin{align*}%\label{polynomiai:f_n}
g_{s} ( x)= x_n^{r_s}+ \sum_{j=1}^{r_s} a_{n-1,s,j} (x^{n-1}) x_n^{r_s-j}.
\end{align*}
The coefficients $a_{n-1,s,j}$ can be arranged in a row vector  
$a_{n-1} \in \C\{x^{n-1}\}^{p_n}$ where $p_n:=\sum_s r_s$.  
Let $f_n$ be the product of the $g_s$'s.  The generalized discriminants $\Delta_{n,i} $ of $f_n$ are 
polynomials in $a_{n-1}$.  
  Let $j_n$ be a positive integer such that   \begin{align}\label{discriminants:n}
\Delta_{n,i} ( a_{n-1} )\equiv 0 \qquad i<j_n   ,
\end{align}
and  $\Delta_{n,j_n}  ( a_{n-1} ) \not \equiv 0$.  
 Then, after a linear change of coordinates $x^{n-1}$, we may write 
    \begin{align*}%\label{polynomiai:f_n-1}
 \Delta_{n,j_n} ( a_{n-1} ) =  u_{n-1} (x^{n-1}) (x_{n-1}^{p_{n-1}}
 + \sum_{j=1}^{p_{n-1}} a_{n-2,j} (x^{n-2}) x_{n-1}^{p_{n-1}-j} ) 
 .
\end{align*}
where $u_{n-1}(0)\ne 0$ and for all $j$, $a_{n-2,j}(0)=0$.  We denote $$
f_{n-1} = x_{n-1}^{p_{n-1}}+
  \sum_{j=1}^{p_{n-1}} a_{n-2,j} ( x^{n-2}) x_{n-1}^{p_{n-1}-j} $$
    and the vector 
of its coefficients $a_{n-2,j}$ by $a_{n-2} \in \C\{x^{n-2}\}^{ p_{n-1}}$.   
Let $j_{n-1}$ be the positive integer such that the first $j_{n-1}-1$ generalized discriminants $\Delta_{n-1,i} $ 
of $f_{n-1}$ are identically zero and  $\Delta_{n-1,j_{n-1}} $ is not.  Then again we define 
$f_{n-2} ( x^{n-2})$ as the Weierstrass polynomial associated to 
 $\Delta_{n-1,j_{n-1}}  $. 
 
We continue this construction and  
 define a sequence of pseudopolynomials $f_{i} (  x^i )$, $i=1, \ldots, n-1$, such that 
 $f_i= x_i^{p_i}+ \sum_{j=1}^{p_i} a_{i-1,j} (x^{i-1}) x_i^{p_i-j}  $ is the Weierstrass polynomial associated to the first non identically zero generalized discriminant $\Delta_{i+1,j_{i+1}} ( a_{i} )$ of $f_{i+1}$, 
where  we denote in general $a_{i}= (a_{i,1} , \ldots , a_{i,p_{i+1}} )$, 
  \begin{align}\label{polynomials:f_i}
 \Delta_{i+1,j_{i+1}} ( a_{i} ) =  u_{i} (x^{i})  (x_i^{p_i}+ \sum_{j=1}^{p_i} a_{i-1,j} (x^{i-1}) x_i^{p_i-j}  ) ,  
 \quad i=0,...,n-1.  
\end{align}
 Thus the vector  
of functions $a_i$ satisfies  
  \begin{align}\label{discriminants:i}
\Delta_{i+1,k} ( a_{i} )\equiv 0 \qquad k<j_{i+1}   ,  \quad i=0,...,n-1. 
\end{align}

This means in particular that 
  \begin{align*}%\label{discriminants:0}
\Delta_{1,k} ( a_{0} ) \equiv 0 \quad \text {for } k<j_1  \text { and }  \Delta_{1,j_1} ( a_{0} ) \equiv u_0 ,
\end{align*}
where $u_0$ is a non-zero constant.

%%%%%%%%%%%%%%%%%%%%%%%%%%%%%%%%%%%%%%%%%%%%%%

\subsection{Approximation by Nash functions} Consider \eqref{polynomials:f_i} and 
\eqref{discriminants:i} as a system of polynomial equations on $a_i(x^i)$, $u_i (x^i)$.  By construction,  this system admits convergent solutions.  Therefore, by Theorem \ref{nest_ploski},   there exist a new set of variables $z=(z_1,..., z_s)$, an increasing function $\tau$, and  convergent power series $z_i(x)\in\C \{x\}$ 
{vanishing at 0} such that 
$z_1(x)$,..., $z_{\tau(i)}(x)$ depend only on $(x_1,..., x_{i})$, algebraic power series 
$u_i (x^i,z)  \in\C \langle x^{i},z_1,...,z_{\tau(i)}\rangle$ and  vectors of algebraic power series 
$a_i(x^i,z) \in\C \langle x^{(i)},z_1,...,z_{\tau(i)}\rangle^{p_i}$, such that $a_i(x^i,z)$, $u_i (x^i,z)$ are solutions of  \eqref{polynomials:f_i},  
\eqref{discriminants:i} and $a_i(x^i)= a_i(x^i,z(x^i))$,  
$u_i(x^i)= u_i(x^i,z(x^i))$.  

For $t\in \C$ we define 
\begin{align*}%\label{polynomiaisdef:F_n(t)}
 F_n(t,x) = \prod_s G_s(t,x)  , \quad G_{s} (t, x)=  x_n^{r_s}+ 
 \sum_{j=1}^{r_s} a_{n-1,s,j} (x^{n-1}, t z(x^{n-1})) x_n^{r_s-j}  
\end{align*}
 \begin{align*}%\label{polynomiaisdef:F_i(t)}
 F_i(t,x) = x_i^{p_i}+ \sum_{j=1}^{p_i} a_{i-1,j} (x^{i-1}, t z(x^{i-1} ) x_i^{p_i-j} , \quad i=0, ... , n-1.
\end{align*}
Finally we set $F_0\equiv 1$.  
Because $u_i(0,0)= u_i(0,z(0))\ne 0$,  the family $F_{i} (t, x)$ satisfies the assumptions of Theorem \ref{theorem} with $|t|<R$ for any $R<\infty$.

\begin{cor} \label{homeotoNash}  
%Let $\K = \R$ or $\C$.  
Let  $(V,0) \subset (\K^n,0)$ be an analytic germ defined by $g_1=...  =g_k=0$ 
with $g_s \in \K \{x\}$.  
Then there are algebraic power series $\hat g_s \in \K \langle x\rangle$  and 
a homeomorphism germ $h: (\K^n,0) \to (\K^n,0)$ such that $h (g_s ^{-1}(0)) = \hat  g_s ^{-1} (0) $ for 
$s=1, ...  ,k$.  In particular,   $h(V)$ is the Nash set germ $ \{\hat g_1= ...  = \hat g_k=0\}$.  
\end{cor}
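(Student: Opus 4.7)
The plan is to feed the construction of Subsections 4.1--4.3 into the refined Varchenko theorem (Theorem \ref{theorem2}). First I would reduce to the case where each $g_s$ is a pseudopolynomial in $x_n$: a generic linear change of coordinates makes every $g_s$ $x_n$-regular, and Weierstrass preparation replaces $g_s$ by its distinguished polynomial $\tilde g_s$ without changing the zero set; the final homeomorphism and Nash approximants obtained for the $\tilde g_s$ then transport back through the inverse linear change, which as an algebraic map sends Nash sets to Nash sets.

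Assuming the $g_s$ are pseudopolynomials, I apply the construction of Subsection 4.2 to $f_n=g_1\cdots g_k$ to obtain the Weierstrass tower $f_i$ with coefficient vectors $a_i$ and units $u_i$ solving \eqref{polynomials:f_i}--\eqref{discriminants:i}, and then invoke Theorem \ref{nest_ploski} to get nested algebraic solutions $a_i(x^i,z),\,u_i(x^i,z)\in\K\langle x^i,z_1,\dots,z_{\tau(i)}\rangle$ together with convergent $z(x)$ with $z(0)=0$ reproducing the original analytic data upon substitution. The deformation $F_i(t,x)$ from the end of Subsection 4.3 then interpolates between the analytic picture $F_n(1,x)=\prod_s g_s(x)$ at $t=1$ and the algebraic picture $F_n(0,x)=\prod_s\hat g_s(x)$ at $t=0$, where $\hat g_s(x):=G_s(0,x)=x_n^{r_s}+\sum_j a_{n-1,s,j}(x^{n-1},0)\,x_n^{r_s-j}\in\K\langle x\rangle$ is the desired Nash approximation.

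I then verify the hypotheses of Theorem \ref{theorem} for the family $F_i(t,x)$ on a small polydisc in $t$: condition (1) is the definition of $V$; condition (2) is exactly what \eqref{polynomials:f_i}--\eqref{discriminants:i} enforce identically in the variables $z$, and hence in $t$ via substitution; (3)--(5) follow by shrinking the polydiscs and using that $u_i(0,tz(0))=u_i(0,0)\neq 0$ for all $t$; and (6) holds because every coefficient of $F_i$ vanishes at $x=0$. Theorem \ref{theorem2} applied to the factorisation $F_n=G_1\cdots G_k$ then yields a continuous family of homeomorphisms $h_t$ with $h_t(G_s^{-1}(0)\cap(\{0\}\times\C^n))=G_s^{-1}(0)\cap(\{t\}\times\C^n)$ for each $s$; taking $h:=h_1^{-1}$ and conjugating by the preparatory linear change completes the proof. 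The only delicate point, already handled in Section \ref{nestedploski}, is that condition (2) must survive the approximation uniformly in $t$, which is precisely why Theorem \ref{nest_ploski} is formulated with the nested dependence $a_i(x^i,z)\in\K\langle x^i,z_1,\dots,z_{\tau(i)}\rangle$ rather than as a plain Artin--P\l oski statement: without the nested structure the approximated discriminants could stop defining the lower-dimensional strata at generic $t$, destroying the tower required by Varchenko.
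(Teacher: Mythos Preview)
Your argument is correct and is exactly the paper's approach: after the reduction to pseudopolynomials (which the paper leaves implicit by simply starting Subsection~4.2 with pseudopolynomials), one builds the Weierstrass tower, applies Theorem~\ref{nest_ploski}, forms the deformation $F_i(t,x)$, and invokes Theorem~\ref{theorem2} with $\hat g_s = G_s(0,\cdot)$; the paper's proof of the corollary is literally the one line ``set $\hat g_i(x)=G_i(0,x)$ and apply Theorem~\ref{theorem2}'', the rest having been packaged in the preceding subsections. The only point you have not addressed is the case $\K=\R$: Theorems~\ref{theorem} and~\ref{theorem2} are stated over $\C$, and one needs the observation (which the paper supplies) that when the pseudopolynomials $F_i$ have real coefficients, Varchenko's homeomorphisms $h_t$ are conjugation-invariant and therefore restrict to homeomorphisms of $(\R^n,0)$.
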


\begin{proof}
For $\K =\C$ we set $\hat g_i(x) = G_i (0,x)$ and then the corollary follows from Theorem \ref{theorem2}.  \\ 
The real case follows from the complex one because if the pseudopolynomials $F_i$ of subsection \ref{pseudopolynomials} have real coefficients then the homeomorphisms $h_t$ constructed
 in \cite{varchenko1972} are conjugation invariant, cf.  \S 6 of.  \cite{varchenko1972}. 
\end{proof}

Now Theorem \ref{homeotoalgebraic} follows from Corollary \ref{homeotoNash} and the following result.

\begin{thm} \label{Nashtoalgebraic}  {\rm (\cite{bochnakkucharz1984} Theorem 2.)}
 Let  $(V,0) \subset (\K^n,0)$ be a Nash set germ.  Then there is a local Nash 
diffeomorphism $\sigma: (\K^n,0) \to (\K^n,0)$ such that 
$\sigma(V)$ is the germ of an algebraic subset of $\K^n$.  
\end{thm}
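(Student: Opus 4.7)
The plan is to use an Artin--Mazur-type embedding to realize $(V,0)$ as the trace of an algebraic variety in a larger affine space, and then to project back to $\K^n$ via a generic linear projection that is finite étale near the origin.

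\textbf{Artin--Mazur embedding.} Write $(V,0)=\{g_1=\cdots=g_k=0\}$ with $g_i\in\K\lg x\rg$, $g_i(0)=0$. By an Artin--Mazur argument (whose content is essentially the structure of algebraic power series underlying Theorem \ref{nest_ploski}), one enlarges the list to $g=(g_1,\dots,g_N)$ and produces polynomials $P_1,\dots,P_N\in\K[x,y_1,\dots,y_N]$ satisfying $P_j(x,g(x))\equiv 0$ and $\det(\partial P_j/\partial y_i)(0,0)\neq 0$. The implicit function theorem then identifies the algebraic set $X:=\{P_1=\cdots=P_N=0\}\subset\K^{n+N}$, in a neighborhood of the origin, with the graph of $g$, so $X$ is a nonsingular affine variety of dimension $n$ at $0$. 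Under the Nash graph embedding $\Phi(x)=(x,g(x))$, the germ $(V,0)$ corresponds precisely to $X\cap\{y_1=\cdots=y_k=0\}$, which is algebraic in $\K^{n+N}$.

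\textbf{Projecting back to $\K^n$.} Choose a linear projection $\pi:\K^{n+N}\to\K^n$ whose kernel is transverse to the tangent space $T_0X$; a generic choice works. Then $\pi|_X$ has bijective differential at $0$, and after restricting to a suitable Zariski open neighborhood of $0$ in $X$ it becomes finite étale onto an open subset of $\K^n$. Set $\sigma:=\pi\circ\Phi:(\K^n,0)\to(\K^n,0)$; this is a Nash local diffeomorphism. Then
$$\sigma(V)=\pi\bigl(X\cap\{y_1=\cdots=y_k=0\}\bigr)$$
is the image of a closed algebraic subvariety under a finite morphism of affine varieties, and is therefore Zariski closed, i.e., an algebraic germ in $\K^n$.

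\textbf{Main obstacle.} The delicate point is the algebraic-geometric input in the second step: one must ensure that the étale-at-a-point property of $\pi|_X$ can be upgraded, on a small enough Zariski neighborhood of $0$ in $X$, to a finite (hence closed) morphism, so that images of closed subvarieties are Zariski closed rather than merely constructible or analytic. Zariski's Main Theorem (quasi-finite étale morphisms factor as open immersions into finite étale morphisms), applied locally around $0$, supplies the required finiteness; once this is in place, algebraicity of $\sigma(V)$ is immediate. The Artin--Mazur step in the first part is a standard structural consequence of the nested approximation framework of Section \ref{nestedploski}, and fits naturally with the language of the paper.
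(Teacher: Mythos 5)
Your overall plan — Artin--Mazur embedding of the Nash germ into an algebraic variety $X\subset\K^{n+N}$, then a generic linear projection back to $\K^n$ that is étale at the origin — is exactly the approach taken in the paper for the closely related strengthening Theorem~\ref{bochnakkucharzbetter} (and it is the approach of \cite{bochnakkucharz1984}, which the paper cites for the statement at hand). So the route is the right one. However, the justification of the crucial finiteness step is genuinely wrong.

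You claim that ``after restricting to a suitable Zariski open neighborhood of $0$ in $X$'' the projection ``becomes finite étale onto an open subset of $\K^n$,'' and you invoke Zariski's Main Theorem to supply finiteness. Neither can work as stated. Restricting the \emph{source} to a Zariski open never promotes a non-finite morphism to a finite one; quite the opposite, properness is typically destroyed by deleting closed sets. For a concrete obstruction: take $X=V\bigl(y(xy-1)\bigr)\subset\K^2$ and $\pi$ the projection to the $x$-axis; every fiber is finite and $\pi$ is étale at $0$, yet $\pi^{-1}(W)\to W$ fails to be finite for \emph{every} Zariski open $W\ni 0$, because the branch $xy=1$ escapes to infinity as $x\to 0$. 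Zariski's Main Theorem does not help here: it factors a quasi-finite morphism as an open immersion into a finite morphism $\bar X\to\K^n$ by \emph{enlarging} $X$, so $X\cap\{y=0\}$ is only locally closed in $\bar X$ and its image is constructible, not closed --- which is exactly the point you needed to settle. The correct mechanism is Noether normalization for affine varieties: if $X$ is an affine algebraic set of pure dimension $n$ in $\K^{n+N}$, then a \emph{generic} linear $\pi:\K^{n+N}\to\K^n$ restricts to a \emph{finite} morphism on all of $X$, because the center of projection (a generic $(N-1)$-plane in the hyperplane at infinity of $\proj^{n+N}$) misses the set at infinity of the projective closure of $X$, which has dimension at most $n-1$. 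This is a different genericity condition than transversality of $\ker\pi$ to $T_0X$, and you need both.

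Two further points need attention. First, the algebraic set $X=\{P_1=\cdots=P_N=0\}$ you extract directly from the Jacobian form of Artin--Mazur may have extraneous components, possibly of dimension $>n$, on which no linear projection to $\K^n$ can be finite. You should replace $X$ by the Zariski closure of the graph of $g$ (or, as the paper does in the proof of Theorem~\ref{bochnakkucharzbetter}, by its normalization), which is irreducible of dimension $n$. Second, the identity $\sigma(V)=\pi\bigl(X\cap\{y_1=\cdots=y_k=0\}\bigr)$ must be understood as an equality of \emph{germs} at $0$, and this requires that far-away points of the algebraic set $Y:=X\cap\{y=0\}$ do not map near $0$. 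Since $\pi|_X$ is finite and $\dim Y\le n-1$, one can choose $\pi$ generically so that $\pi^{-1}(0)\cap Y=\{0\}$; then by continuity the contribution of $Y$ near $0$ in $\K^n$ comes only from the sheet of $X$ through $0$, on which $\pi$ is a homeomorphism, and the germ of $\sigma(V)$ at $0$ is indeed that of the algebraic set $\pi(Y)$. These corrections bring your argument in line with the one used in the paper for Theorem~\ref{bochnakkucharzbetter}.
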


%%%%%%%%%%%%%%%%%%%%%%%%%%%%%%%%%%%%%%%%%%%%%%%%%%%%%%%%

%%%%%%%%%%%%%%%%%%%%%%%%%%%%%%%%%%%%%%%%%%%%%%

%%%%%%%%%%%%%%%%%%%%%%%%%%%%%%%%%%%%%%%%%%%%%%%%%%%%%%
\section{Topological equivalence between analytic and algebraic function germs}\label{functiongerms}

In this section we show Theorem  \ref{homeotopolynomial} and Theorem \ref{generalhomeo}.

%%%%%%%%%%%%%%%%%%%%%%%%%%%%%%%%%%%%%%%%%%%%%%%%%%%%%%

%%%%%%%%%%%%%%%%%%%%%%%%%%%%%%%%%%%%%%%%%%%%%%%%%%%%%%
\subsection{A variant of Varchenko's method} \label{pseudopolynomials2}

We replace the assumptions (2) and (3) of Subsection \ref{pseudopolynomials} by
 \begin{enumerate}
\item [(2')]
There are $q_i\in \N$ such that $x_1^{q_i} F_{i-1} (t, x^{i-1})=0$  if and only if the equation $F_{i} (t, x^{i-1}, x_i)=0$, has fewer roots than for generic $(t, x^{i-1})$. 
\item [(3')]
$F_1\equiv 1$. 
\end{enumerate}

Then Varchenko's method gives the following result. 

\begin{thm}\label{theoremy} 
Under the above assumptions,  $V$ is topologically equisingular along $W$ with respect 
to the family of sections $V_t = V\cap (\{t\}\times \C^n)$.   Moreover all the sections $V\cap \{x_1=const\}$ are also equisingular.  
 This means that for all $t\in W$ there is a homeomorphism 
 $h_t : U_{n,0} \to U_{n,t}$ such that  $h_t (V_0)= V_t$, $h_t(0)=0$, and $h_t$ preserves the levels of $x_1$ 
 \begin{align}\label{h_t}
 h_t(x_1, ...,x_n) = (x_1, \hat h_t(x_1, ...,x_n)).
 \end{align}
\end{thm}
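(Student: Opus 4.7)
The plan is to run Varchenko's inductive construction from the proof of Theorem~\ref{theorem}, modified so that the induction starts at level $i=1$ (instead of $i=0$) and so that the root-tracking at each step remains valid with the enlarged discriminant set $\{x_1^{q_i}F_{i-1}=0\}$. The base case is immediate: by (3') $F_1\equiv 1$, so $V_{1,t}$ is empty and we may set $h_{1,t}(x_1)=x_1$, which preserves the first coordinate. Inductively, given $h_{i-1,t}$ that preserves $x_1$ and depends continuously on $t$, we lift it to $h_{i,t}(x^{i-1},x_i)=(h_{i-1,t}(x^{i-1}),h_{i,t,i}(x^i))$ by tracking the roots of $F_i$ in the variable $x_i$ along the family of homeomorphisms $\{h_{i-1,st}\}_{s\in[0,1]}$.

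The crucial point is that, because $h_{i-1,s}$ preserves $x_1$ for every $s$, the Varchenko tracking path $s\mapsto(st,h_{i-1,st}(x^{i-1}))$ lies entirely in a single affine slice $\{x_1=c\}$, where $c$ is the first coordinate of the base point $x^{i-1}$. Whenever $c\neq 0$, this path avoids the hyperplane $\{x_1=0\}$, so the only obstruction to continuously following the roots of $F_i(\cdot,\cdot,x_i)$ is the codimension-one locus $\{F_{i-1}=0\}$, exactly as in the proof of Theorem~\ref{theorem}. Hence Varchenko's construction applies verbatim on the open set $\{x_1\neq 0\}$ and produces a continuous family of homeomorphisms $h_{i,t,c}$ on the slices $\{x_1=c\}$, $c\neq 0$. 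Preservation of $x_1$ propagates automatically through the induction.

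The main obstacle is extending these homeomorphisms continuously across $\{x_1=0\}$ so that the limit is still a homeomorphism on that slice. Condition (2') forces the distinct roots of $F_i$ to coalesce on $\{x_1=0\}$, so the limiting map collapses some branches; continuity of the limit follows from the continuous dependence of the unordered tuple of roots of a monic polynomial on its coefficients, and injectivity reduces to showing that the coalescence pattern on $\{x_1=0\}$ at $t=0$ matches that at a general $t$. This matching is forced by the induction hypothesis that $h_{i-1,t}$ sends $\{F_{i-1}(0,\cdot)=0\}$ to $\{F_{i-1}(t,\cdot)=0\}$ while preserving $x_1$-levels, together with the continuity arguments of~\cite{varchenko1972}. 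Once $h_t$ has been built with the layered form~\eqref{h_t}, the second assertion of the theorem---equisingularity of all sections $V\cap\{x_1=\text{const}\}$---is immediate by restriction of $h_t$ to each slice $\{x_1=c\}$.
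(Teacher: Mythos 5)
There is a genuine gap, and it stems from misplacing where Varchenko's machinery already does the work.

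The clean observation that drives the paper's proof is this: by assumption~(2'), the discriminant set of $F_i$ is precisely $\{x_1^{q_i}F_{i-1}=0\} = \{x_1=0\}\cup F_{i-1}^{-1}(0)$. The inductive hypothesis says that $h_{i-1,t}$ preserves $x_1$-levels (so preserves $\{x_1=0\}$) and preserves $F_{i-1}^{-1}(0)$. Therefore $h_{i-1,t}$ preserves the \emph{entire} discriminant set of $F_i$. That is exactly Varchenko's lifting criterion (\cite{varchenko1972}, sections 2 and 3): if a continuous family $h_{i-1,t}$ preserves the discriminant set of $F_i$, the lift $h_{i,t}$ exists and preserves $F_i^{-1}(0)$. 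Nothing further is needed; in particular $\{x_1=0\}$ requires no special treatment, and preservation of $x_1$-levels is automatic because each $h_{i,t}$ is a lift and $h_{1,t}=\mathrm{id}$.

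Your argument instead restricts to $\{x_1\neq 0\}$, runs the construction slicewise there, and then tries to extend across $\{x_1=0\}$ by hand. This is the wrong decomposition: Varchenko's construction is designed to operate \emph{through} the discriminant locus, not away from it, so once you recognize $\{x_1=0\}$ as part of that locus the extension problem dissolves. As written, your extension step is not a proof. Continuity of the unordered tuple of roots gives a continuous multivalued correspondence, but turning it into a homeomorphism that matches the branches on both sides of $\{x_1=0\}$, for every $t$, is precisely the nontrivial content you label ``the coalescence pattern at $t=0$ matches that at general $t$'' — and you only assert it, appealing vaguely to ``continuity arguments of \cite{varchenko1972}.'' That assertion is exactly equivalent to the statement that $h_{i-1,t}$ preserves the discriminant set, which is what you should have checked directly from the inductive hypothesis. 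Reorganize the proof so that the inductive claim explicitly includes ``$h_{i-1,t}$ preserves $\{x_1^{q_i}F_{i-1}=0\}$'' and the gap closes.
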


Indeed, recall that the homeomorphisms $h_t$ are constructed inductively by lifting step by step the homeomorphisms $ h_{i,t} : U_{i,0}  \to U_{i,t} $, so that 
 $ h_{i,t} (x^{i-1}, x_i) = ( h_{i-1,t} (x^{i-1}),  h_{i,t,i}  (x^{i})) $.  
At each stage such  lifts  $h_{i,t}$ exist and preserve  
the zero set of $F_i$ if $h_{i-1,t}$ depends continuously on $t$ and preserves the discriminant set of 
$F_i$, see \cite{varchenko1972} sections 2 and 3.   

Because $F_1\equiv 1$, by (2'), the discriminant set of $F_2$ is either empty or given by $x_1=0$.  
Therefore we may take $h_{1,t}(x_1) = x_1$.    Then we show by induction on $i$  that each $h_{i,t}$ 
can be lifted so that the lift $h_{i+1,t}$ preserves the zero set of $F_{i+1}$ and the values of $x_1$.  The former 
condition follows by inductive assumption, $h_{i,t}$ preserves the discriminant set of $F_{i+1}$.  
The latter condition is satisfied trivially since $h_{i+1,t}$ is a lift of $h_{i,t}$.

%%%%%%%%%%%%%%%%%%%%%%%%%%%%%%%%%%%%%%%%%%%%%%%%%%%%%%
\subsection{Equisingularity of functions.  }
We apply Theorem \ref{theoremy} to study the equisingularity of analytic function germs as follows.  
Let $G (t,y) : (\C^l\times \C^{n-1},0) \to (\C,0)$ be analytic, $y=(y_1, ...,y_{n-1})$.  
We associate to $G$ its graph $V= \{(t,x_1,x_2,...,x_n) ; x_1= G_t(x_2,...,x_n)\}$, thus fixing the following 
notation 
\begin{align}\label{notation}
x= (x_1,x_2,...,x_n)= (x_1, y)
\end{align}
 We consider $G$ as an 
analytic family of analytic function germs $G_t : (\C^{n-1},0) \to (\C,0)$ parametrized by 
$t\in W$, where $W$ is a neighborhood of the origin in $\C^l$.

\begin{thm}\label{theoremg} 
 Suppose that $V$  and $W$ satisfy 
 the assumptions of Theorem \ref{theoremy}.  
 Then the family of analytic function germs $G_t$ is topologically equisingular.  
 This means that there is a family of local homeomorphisms $\sigma_t : (\C^{n-1},0) \to \C^{n-1},0)$ such that 
 $$
 G_0 = G_t \circ \sigma_t .
 $$
\end{thm}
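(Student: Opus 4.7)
The plan is to read off $\sigma_t$ directly from the fiber-preserving homeomorphism $h_t$ provided by Theorem \ref{theoremy}. With the notation \eqref{notation}, $y=(y_1,\ldots,y_{n-1})=(x_2,\ldots,x_n)$, the graph is
\[
V=\{(t,x_1,y):x_1=G_t(y)\},\qquad V_t=\{(x_1,y):x_1=G_t(y)\},
\]
so, identifying $y$ with $(x_2,\ldots,x_n)$, the slice $V_t\subset U_{n,t}$ is the graph of $G_t$ over the $y$-variables. By Theorem \ref{theoremy} there is a family of homeomorphisms $h_t:U_{n,0}\to U_{n,t}$ with $h_t(V_0)=V_t$, $h_t(0)=0$, and the product form \eqref{h_t}, that is $h_t(x_1,y)=(x_1,\hat h_t(x_1,y))$, where $\hat h_t$ depends continuously on $t$.

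I would define
\[
\sigma_t(y):=\hat h_t\bigl(G_0(y),\,y\bigr),\qquad y\in(\C^{n-1},0).
\]
Then $\sigma_t(0)=\hat h_t(0,0)=0$. Since $(G_0(y),y)\in V_0$ and $h_t$ sends $V_0$ to $V_t$, the point
\[
h_t(G_0(y),y)=\bigl(G_0(y),\,\hat h_t(G_0(y),y)\bigr)=\bigl(G_0(y),\sigma_t(y)\bigr)
\]
lies on $V_t$, which by definition of $V_t$ means exactly $G_0(y)=G_t(\sigma_t(y))$, i.e. $G_0=G_t\circ\sigma_t$. Continuity of $\sigma_t$ is immediate from that of $G_0$ and $\hat h_t$.

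To see that $\sigma_t$ is a homeomorphism I would build its inverse symmetrically. Since $h_t$ preserves the $x_1$-coordinate, so does $h_t^{-1}$, and one can write $h_t^{-1}(x_1,z)=(x_1,\hat k_t(x_1,z))$. Because $h_t^{-1}$ sends $V_t$ to $V_0$, applying it to $(G_t(z),z)\in V_t$ gives $G_t(z)=G_0(\hat k_t(G_t(z),z))$. Setting
\[
\tau_t(z):=\hat k_t\bigl(G_t(z),\,z\bigr),
\]
we therefore have $G_t=G_0\circ\tau_t$. Using $h_t\circ h_t^{-1}=\mathrm{id}$ in the $y$-components one finds, for every $z$ near $0$,
\[
\sigma_t(\tau_t(z))=\hat h_t\bigl(G_0(\tau_t(z)),\tau_t(z)\bigr)=\hat h_t\bigl(G_t(z),\hat k_t(G_t(z),z)\bigr)=z,
\]
and analogously $\tau_t\circ\sigma_t=\mathrm{id}$. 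Hence $\sigma_t$ is a germ of homeomorphism with inverse $\tau_t$, and $G_0=G_t\circ\sigma_t$ as required.

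The only conceptual point to justify carefully is the second equality in the chain above: that the product form \eqref{h_t} really allows one to insert $G_0(y)$ for $x_1$ and get a map defined on a full neighborhood of $0$ in $\C^{n-1}$. This is the main (though mild) obstacle, and it is handled by shrinking the polydiscs $U_{i,0}$ so that the graph $y\mapsto(G_0(y),y)$ lands inside the domain of $h_t$; the continuous dependence on $t$ from Theorem \ref{theoremy} guarantees this can be done uniformly for $t$ in a small neighborhood of $0\in W$. Apart from that, the argument is a purely formal consequence of the fact that $h_t$ preserves both the hypersurface $V$ and the $x_1$-levels.
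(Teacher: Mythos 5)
Your proof is correct and follows exactly the paper's approach: defining $\sigma_t(y)=\hat h_t(G_0(y),y)$ and using that $h_t$ preserves both $V$ and the $x_1$-levels to get $G_0=G_t\circ\sigma_t$. The only addition is your explicit construction of the inverse $\tau_t$, a detail the paper leaves implicit.
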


\begin{proof}
It follows from \eqref{h_t} by setting 
$\sigma_t(y) = \hat h_t (G_0(y),y)$.  Indeed, since $h_t$ preserves $V$ we have 
$$
h_t (G_0(y),y) = (G_t (\hat h_t (G_0(y),y)), \hat h_t (G_0(y),y)),
$$
and since it preserves the levels of $x_1$ 
$$
G_0(y) = G_t (\hat h_t (G_0(y),y))  .
$$
\end{proof}

%%%%%%%%%%%%%%%%%%%%%%%%%%%%%%%%%%%%%%

\subsection{Construction of a normal system of equations for a finite family of function germs.}\label{construction:new}
 
Let $g_m :(\C^{n-1},0)\rightarrow(\C,0)$, $m=1, ...,p$, be a finite family of analytic function germs that we assume not identically equal to zero.   % We use the notation \eqref{notation}. 
After a linear change of coordinates $(x_2, ...,x_n)$, $\prod_{m=1}^p(x_1-g_m (x_2,...,x_n))$ is equivalent to a 
pseudopolynomial, that is we may write 
 $$%  \begin{align}\label{polynomiainew3:f_n}
\prod_{m=1}^p(x_1-g_m(x_2,...,x_n))= u_{n}  (x)(x_n^{p_n}+ \sum_{j=1}^{p_n} a_{n-1,j} (x^{n-1}) x_n^{p_n-j}), 
$$%\end{align}
where $u_{n}(0)\ne 0$ and $a_{n-1,j}(0)=0$.  We denote 
$$f_{n} (x)= x_n^{p_n}+ \sum_{j=1}^{p_n} a_{n-1,j} (x^{n-1}) x_n^{p_n-j}$$ so that 
\begin{align}\label{new2}
u_n(x) f_n(x)=\prod_{m=1}^p(x_1-\sum_{k=2}^nx_kb_{m,k}(x_2,...,x_n))
\end{align}
with $g_m=\sum_{k=2}^nx_kb_{m,k}$.
We denote by $b  \in\C \{ x\} ^{p(n-1)}$ the vector of the coefficients $b_{m,k}$
 and by $a_{n-1} \in \C\{x^{n-1}\}^{ p_{n}}$ the one
 of the coefficients  $a_{n-1,j}$.

 The generalized discriminants $\Delta_{n,i} $ of $f_n$ are 
polynomials in $a_{n-1}$.  
  Let $j_n$ be a positive integer such that  
 $$%  \begin{align}\label{discriminantsnew:n}
\Delta_{n,i} ( a_{n-1} )\equiv 0 \qquad i<j_n   ,
$$%\end{align}
and  $\Delta_{n,j_n}  ( a_{n-1} ) \not \equiv 0$.  
 After a change of coordinates $(x_2, ...,x_{n-1})$ we may write 
 %, we define as $f_{n-1} ( x^{n-1})$ the Weierstrass polynomial associated to  $\Delta_{n,j_n}(a_{n-1})  $:
$$%   \begin{align}\label{polynomiainew:f_n-1}
 \Delta_{n,j_n} ( a_{n-1} ) =  u_{n-1}  (x^{n-1})  x_1^{q_{n-1}} (x_{n-1}^{p_{n-1}}+
  \sum_{j=1}^{p_{n-1}} a_{n-2,j} (x^{n-2}) x_{n-1}^{p_{n-1}-j} ) , 
$$%\end{align}
where $u_{n-1}(0)\ne 0$ and $a_{n-2,j}(0)=0$.  We denote 
$
f_{n-1} = x_{n-1}^{p_{n-1}}+
  \sum_{j=1}^{p_{n-1}} a_{n-2,j} (x^{n-2}) x_{n-1}^{p_{n-1}-j} $
    and the vector 
of its coefficients $a_{n-2,j}$ by $a_{n-2} \in \C\{x^{n-2}\}^{ p_{n-1}}$.  
Let $j_{n-1}$ be the positive integer such that the first $j_{n-1}-1$ generalized discriminants $\Delta_{n-1,i} $ 
of $f_{n-1}$ are identically zero and  $\Delta_{n-1,j_{n-1}} $ is not.  Then again we divide  $\Delta_{n-1,j_{n-1}}  $ 
by the maximal power of $x_1$ and, after a change of coordinates $(x_2, ...,x_{n-2})$, denote the associated 
Weierstrass polynomial by $f_{n-2} ( x^{n-2})$.  

We continue this construction and  
 define a sequence of pseudopolynomials $f_{i} (  x^i )$, $i=1, \ldots, n-1$, such that 
 $f_i= x_i^{p_i}+ \sum_{j=1}^{p_i} a_{i-1,j} (x^{i-1}) x_i^{p_i-j}  $ is the Weierstrass polynomial associated to the first non identically zero generalized discriminant $\Delta_{i,j_i} ( a_{i+1} )$ of $f_{i+1}$,  divided by the maximal power of $x_1$, 
where  we denote in general $a_{i}= (a_{i,1} , \ldots , a_{i,p_{i}} )$, 
  \begin{align}\label{polynomialsnew:f_i}
 \Delta_{i+1,j_{i+1}} ( a_{i} ) =  u_{i} (x^{i})  x_1^{q_i} (x_i^{p_i}+ \sum_{j=1}^{p_i} a_{i-1,j} (x^{i-1}) x_i^{p_i-j}  ) ,  
 \quad i=0,...,n-1.  
\end{align}
 Thus the vector  
of functions $a_i$ satisfies  
  \begin{align}\label{discriminantsnew:i}
\Delta_{i+1,k} ( a_{i-1} )\equiv 0 \qquad k<j_{i+1}   ,  \quad i=0,...,n-1. 
\end{align}

These equations mean in particular that 
  \begin{align}\label{discriminantsnew:0}
\Delta_{1,k} ( a_{0} ) \equiv 0 \quad \text {for } k<j_1  \text { and }  \Delta_{1,j_1} ( a_{0} ) \equiv u_0 x_1^{q_0}  .
\end{align}
where $u_0$ is a non-zero constant.  Hence $f_1 \equiv 1$.  

%%%%%%%%%%%%%%%%%%%%%%%%%%%%%%%%%%%%%%%%%%%%%%

\subsection{Approximation by Nash functions} \label{approximation2}
Consider  \eqref{new2}, \eqref{polynomialsnew:f_i}, \eqref{discriminantsnew:i}, as a system of polynomial equations on 
$a_i(x^i)$, $u_i(x^i)$, and $b(x)$.   By construction,  this system admits convergent solutions.  Therefore, by Theorem \ref{nest_ploski},   there exist a new set of variables $z=(z_1,..., z_s)$, an increasing function $\tau$, and  convergent power series $z_i(x)\in\C \{x\}$  vanishing at 0  such that 
$z_1(x)$,..., $z_{\tau(i)}(x)$ depend only on $(x_1,..., x_{i})$, algebraic power series 
$u_i (x^i,z)  \in\C \langle x^{i},z_1,...,z_{\tau(i)}\rangle$,  
and  vectors of algebraic power series 
$a_i(x^i,z) \in\C \langle x^{(i)},z_1,...,z_{\tau(i)}\rangle^{p_i}$, $ b(x,z)  \in\C \langle x,z\rangle ^{n-1}$, 
such that 
$a_i(x^i,z)$, $u_i (x^i,z)$, $ b(x,z)$, are solutions of  \eqref{new2}, \eqref{polynomialsnew:f_i},  
\eqref{discriminantsnew:i} and  $a_i(x^i)= a_i(x^i,z(x^i))$,  
$u_i(x^i)= u_i(x^i,z(x^i))$, $b(x)=  b(x,z(x))$.

For $t\in \C$ we define 
$$% \begin{align}\label{polynomiaisdef:F_i(t,y,x)}
 F_i(t, x) = x_i^{p_i}+ \sum_{j=1}^{p_i} a_{i-1,j} ( x^{i-1}, tz(x^{i-1})) x_i^{p_i-j} . 
$$%\end{align}
In particular, by \eqref{discriminantsnew:0},   $F_1\equiv 1$.  
Since  
$$u_n(x,tz(x))  F_n(t, x) = \prod_{m=1}^p (x_1-\sum_{k=2}^nx_kb_{m,k}(x,tz(x))),$$  
 by the Implicit Function Theorem 
there are algebraic power series  
$G_m \in  \C \langle t, y_1. ... , y_{n-1}\rangle$ such that 
$$
F_n^{-1}(0)= \bigcup_m \{(t,x); x_1 = G_m (t,x_2,...,x_n)\}
$$
as germs at the origin.  Then $g_m(y) = G_m (1,y)$ and $G_m (0,y)\in   \C \langle  y \rangle$.  
We denote  $\hat g_m(y) = G_m (0,y)$.  

Because $u_i(0,0)= u_i(0,z(0))\ne 0$,  the family $F_{i} (t, x)$ satisfies the assumptions of Theorem \ref{theoremy} with $|t|<R$ for arbitrary $R<\infty$.  
 By Theorem \ref{theoremy} there is 
a continuous family of homomorphism germs $h_t: (\C^n,0) \to (\C^n,0)$, $h_t (x) = (x_1, \hat h_t (x_1,x_2,...,x_n))$, 
such that 
$$
h_t (g_m (y),y) = (G_m (t,\hat h_t (g_m (y),y)), \hat h_t (g_m(y),y)),
$$
Fix one $m$, for instance $m=1$, and set 
$$
\sigma_{t} (y) =  \hat h_t (g_1(y),y)
$$
as in the proof of Theorem \ref{theoremg} (we use here the notation \eqref{notation}).   
Then $ g_1(y) = G_1 (t, \sigma_t  (y))$  and in particular 
\begin{align}\label{homeofunction}
 g_1(y) = \hat g_1 (\sigma_0 (y)).
\end{align}
It is not true in general that  $g_m(y) = \hat g_m (\sigma_0 (y))$  since the homeomorphism $\sigma_t$ is 
defined by restricting $h_t$ to the graph of $G_1$.  If we define 
$$
\sigma_{m,t} (y) =  \hat h_t (g_m(y),y)
$$
then we have 
$$g_m(y) = \hat g_m (\sigma_{m,0} (y))$$
Both homeomorphisms coincide on $X_{m} = \{y\in (\C^{n-1},0); ( g_m-g_1) (y)=0\}$.  Therefore if we define 
$\hat X_{m} = \{y\in (\C^{n-1},0); ( \hat g_m- \hat g_1) (y)=0\}$ then 
\begin{align}\label{homeoset}
\sigma_0(\hat X_{m}) = X_{m}.
\end{align}

    Therefore we have the following result.  

\begin{prop} \label{homeotoNash2}  
Let  $(V_i,0) \subset (\K^n,0)$ be a finite family of analytic set germs and let  $g: (\K^n,0)\to (\K, 0)$ be an analytic function germ.   
Then there are Nash set germs $(\hat V_i,0) \subset (\K^n,0)$,  
an algebraic power series $\hat g \in \K \langle x\rangle$,    and 
 a homeomorphism germ $\hat \sigma : (\K^{n-1},0) \to (\K^{n-1},0)$ such that $\sigma ( \hat  V_i) = 
V_i$ and $g\circ \hat \sigma= \hat g$.   
\end{prop}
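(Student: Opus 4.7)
The plan is to reduce the proposition to the construction of subsections~\ref{construction:new} and \ref{approximation2} applied to a carefully chosen enlarged family of analytic function germs. The key observation is that the homeomorphism $\sigma_0$ built there not only satisfies $g_1(y)=\hat g_1(\sigma_0(y))$ (equation \eqref{homeofunction}), but also transports each Nash set $\hat X_m=\{\hat g_m-\hat g_1=0\}$ onto the analytic set $X_m=\{g_m-g_1=0\}$ (equation \eqref{homeoset}). Thus, by packaging the defining equations of each $V_i$ as differences $g_m-g_1$ with $g_1=g$, both the function-germ equivalence and the set-germ equivalences can be produced by a single homeomorphism.

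Concretely, I would first write each $V_i$ as the zero set of finitely many non-zero convergent power series $g_{i,1},\ldots,g_{i,k_i}\in\K\{y\}$, which is possible because $\K\{y\}$ is Noetherian. Then I would form the enlarged family $g_1:=g$ and $g_{m(i,j)}:=g+g_{i,j}$ indexed by the pairs $(i,j)$, discarding any members that happen to vanish identically (which can be arranged by perturbing the chosen generators of the ideal of $V_i$ inside that ideal, preserving the zero set). Next I would perform the pseudopolynomial construction of subsection~\ref{construction:new} on this enlarged family: after a generic linear change of coordinates $(x_2,\ldots,x_n)$ the product $\prod_m\bigl(x_1-g_m(y)\bigr)$ becomes a Weierstrass polynomial in $x_n$, and the inductive generalized-discriminant procedure yields pseudopolynomials $f_i$ satisfying \eqref{polynomialsnew:f_i}--\eqref{discriminantsnew:0}.

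Applying the Nested Artin--P\l oski Approximation Theorem (Theorem~\ref{nest_ploski}) to this polynomial system, as in subsection~\ref{approximation2}, then produces an interpolating family $F_i(t,x)$ satisfying the hypotheses of Theorem~\ref{theoremy}, together with algebraic power series $\hat g_m\in\K\langle y\rangle$ for which $\hat g_m(y)=G_m(0,y)$ and $g_m(y)=G_m(1,y)$. Setting $\sigma_0(y)=\hat h_0(g_1(y),y)$ and $\hat g:=\hat g_1$, one obtains from \eqref{homeofunction} that $g\circ\sigma_0=\hat g$, and from \eqref{homeoset} that $\sigma_0(\hat X_{m(i,j)})=X_{m(i,j)}=\{g_{i,j}=0\}$ for every $(i,j)$. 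Taking intersections, the Nash germ
$$
\hat V_i:=\bigcap_{j}\hat X_{m(i,j)}=\bigcap_j\{\hat g_{m(i,j)}-\hat g=0\}
$$
then satisfies $\sigma_0(\hat V_i)=V_i$, which yields the proposition. For $\K=\R$, the conjugation-invariance of Varchenko's homeomorphisms, invoked already in the proof of Corollary~\ref{homeotoNash}, ensures that $\sigma_0$ restricts to a homeomorphism of $(\R^{n-1},0)$.

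The main technical hurdle is verifying that the pseudopolynomial normal form construction of subsection~\ref{construction:new} can be executed for the \emph{whole} enlarged family simultaneously: a single linear change of coordinates must put $\prod_m(x_1-g_m(y))$ into Weierstrass form \emph{and} make each successive generalized discriminant amenable to Weierstrass preparation after factoring out the appropriate power of $x_1$. This is a genericity condition satisfied on a Zariski-open subset of linear coordinate changes, but one must confirm that the enlarged family — obtained by affine translates $g+g_{i,j}$ of the original generators — does not fall into a degenerate locus; this is where the freedom to perturb the generators $g_{i,j}$ inside the ideal of $V_i$ is used. Once this arrangement is in place, the rest of the argument is a direct quotation of Theorems~\ref{nest_ploski} and \ref{theoremy} together with the identities \eqref{homeofunction} and \eqref{homeoset}.
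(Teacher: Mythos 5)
Your proposal is correct and follows essentially the same route as the paper: choose an auxiliary family $g_m$ with $g_1=g$ so that the differences $g_m-g_1$ generate the ideals of the $V_i$, run the construction of subsections~\ref{construction:new} and \ref{approximation2}, and read off the conclusion from \eqref{homeofunction} and \eqref{homeoset}, taking $\hat V_i$ to be the corresponding intersection of the Nash sets $\hat X_m$. Your closing worry about a single generic linear change of coordinates is not actually an obstacle, since the construction permits a fresh change of the remaining variables at each inductive stage; otherwise the argument matches the paper's.
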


\begin{proof}
Let $\K =\C$.  Choose a finite family $g_m :(\C^{n-1},0)\rightarrow(\C,0)$, $m=1, ...,p$, of analytic function 
such that $g_1=g$ and for every $i$, the ideal of $V_i$  is generated by some of the differences 
$g_m-g_1$.  We apply to the family $g_m$ the procedure of subsections \ref{construction:new} and \ref{approximation2} and set $\hat \sigma = \sigma_0$.  The claim now follows from \eqref{homeofunction} and 
\eqref{homeoset}.    

The real case follows from the complex one because if the pseudopolynomials $F_i$ of subsection \ref{pseudopolynomials} have real coefficients then the homeomorphisms $h_t$ constructed
 in \cite{varchenko1972} are conjugation invariant, cf.  \S 6 of.  \cite{varchenko1972}. 
\end{proof}

%%%%%%%%%%%%%%%%%%%%%%%%%%%%%%%%%%%%%%%%%%%%%%

\subsection{Proof of Theorem  \ref{homeotopolynomial} and Theorem \ref{generalhomeo} }

It suffices to show Theorem \ref{generalhomeo}.  It will follow from Proposition \ref {homeotoNash2} and the 
next two results.

\begin{thm}\label{bochnakkucharzbetter}
Let $\K = \R$ or $\C$.  Let  $f_i : (\K^{n},0)\to (\K, 0)$,  be a finite family of Nash function germs.  
Then there is a Nash diffeomorphism $ h : (\K^n,0) \to (\K^n,0)$ and analytic (even Nash) units 
$u_i : (\K^{n},0)\to \K$, $u_i(0)\ne 0$, such that for all $i$,  $u_i(x) f_i (h(x))$  are 
germs of polynomials.    
\end{thm}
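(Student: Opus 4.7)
My strategy is to refine Theorem~\ref{Nashtoalgebraic} so as to track not only the zero sets of the $f_i$ but their Nash-irreducible factorizations. First I would use that $\K\langle x\rangle$ is a regular local ring of dimension $n$, hence a unique factorization domain, to factor
\[
f_i \,=\, c_i\prod_{l=1}^{N} q_l^{e_{il}},
\]
where $c_i\in\K\langle x\rangle^{*}$ is a Nash unit, $q_1,\ldots,q_N$ is a finite list (shared among all $i$ after relabelling) of pairwise non-associate Nash-irreducible elements of $\K\langle x\rangle$, and $e_{il}\ge 0$.

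The main step is then to apply a family version of Theorem~\ref{Nashtoalgebraic} to the finite collection of Nash-irreducible hypersurface germs $\{q_l=0\}$, producing a single Nash diffeomorphism $h:(\K^n,0)\to(\K^n,0)$ such that for each $l$, $h^{-1}(\{q_l=0\})=\{p_l=0\}$ is the germ of an algebraic hypersurface with $p_l\in\K[x]$. Because a Nash diffeomorphism induces a $\K$-algebra automorphism of $\K\langle x\rangle$, each $q_l\circ h$ remains Nash-irreducible, so the algebraic germ $\{p_l=0\}=\{q_l\circ h=0\}$ is also Nash-irreducible. Taking $p_l$ irreducible in $\K[x]$, its factorization in $\K\langle x\rangle$ is multiplicity-free (characteristic zero), and a Nash-irreducible hypersurface cannot be a proper union of subvarieties; hence $p_l$ is a Nash unit times a single Nash-irreducible, and $p_l$ and $q_l\circ h$ generate the same height-one prime ideal of $\K\langle x\rangle$:
\[
q_l\circ h \,=\, w_l\, p_l, \qquad w_l\in\K\langle x\rangle^{*}.
\]

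Substituting into the factorization of $f_i$ then yields
\[
f_i\circ h \,=\, (c_i\circ h)\prod_l (w_l\, p_l)^{e_{il}} \,=\, \Bigl((c_i\circ h)\prod_l w_l^{e_{il}}\Bigr)\cdot\prod_l p_l^{e_{il}},
\]
so the Nash unit $u_i := \bigl((c_i\circ h)\prod_l w_l^{e_{il}}\bigr)^{-1}$ satisfies $u_i(x)\,f_i(h(x)) = \prod_l p_l(x)^{e_{il}} \in \K[x]$, which is the required polynomial.

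The principal obstacle is the family version of Theorem~\ref{Nashtoalgebraic}: finding a \emph{single} Nash diffeomorphism which simultaneously algebraizes every member of a finite collection of Nash-irreducible hypersurface germs. Merely applying Theorem~\ref{Nashtoalgebraic} to the union $\bigcup_l\{q_l=0\}$ is not enough, as already seen with the two Nash branches of a nodal cubic $y^2=x^2(1+x)$, where the union is algebraic but the individual branches are not. This family refinement should follow either by adapting the Bochnak--Kucharz construction to tuples of Nash sets (carrying component labels through the argument) or by an inductive procedure, and it is the heart of the proof; a routine secondary point is the verification in the real case $\K=\R$ that the construction can be chosen equivariantly with respect to complex conjugation so that $h$, the $u_i$, and the $P_i$ are real.
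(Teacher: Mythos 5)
Your complex case argument is essentially correct and close in spirit to the paper's: the family refinement you identify as the principal obstacle is exactly the implication (i)$\Rightarrow$(ii) of Theorem 5 in \cite{bochnakkucharz1984}, which the paper cites, producing a single Nash diffeomorphism $h$ making polynomial the ideal of every analytic irreducible component of a given Nash set germ. Applying it to $\bigcup_l\{q_l=0\}$ (the zero set of $\prod_i f_i$) gives the desired $h$, once one notes that each Nash-irreducible $q_l$ is in fact analytically irreducible (because $\K\lg x\rg$ is excellent and henselian), so that the $\{q_l=0\}$ are precisely the analytic irreducible components.

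The genuine gap is the real case, which you set aside as a routine conjugation-equivariance check. For $\K=\R$ the factorization argument breaks down at the step $q_l\circ h=w_l p_l$: the zero set of a Nash-irreducible element of $\R\lg x\rg$ need not be a hypersurface ($q=x_1^2+x_2^2$ is irreducible with $V(q)=\{0\}$ of codimension two), so the phrase ``Nash-irreducible hypersurface germ'' is already problematic, and the inference from ``$\{q_l\circ h=0\}$ is algebraic'' to ``$q_l\circ h$ is a unit times a polynomial'' fails. To repair this one would have to complexify first, factor in $\C\lg x\rg$ (splitting real irreducibles into conjugate pairs), run the complex argument with conjugation-invariant $h$ and $p_l$, and then descend to $\R$ --- more delicate than you suggest. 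The paper avoids the issue entirely by working at the level of the tuple $f=(f_1,\ldots,f_m)$ rather than of zero sets: it applies the Artin--Mazur theorem to write $f=\Phi\circ s$ with $\Phi$ a polynomial map and $s$ a Nash embedding into the normalization $X$ of the Zariski closure of the graph of $f$, then sets $h=(\pi\circ s)^{-1}$ for a generic linear projection $\pi:X\to\K^n$ which can be chosen real when $\K=\R$. The polynomials come directly from $\Phi$ and $\pi$, with no factorization into irreducibles and hence no reliance on any Nullstellensatz.
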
   

\begin{proof}
For $\K=\C$ Theorem \ref{bochnakkucharzbetter} follows from  Theorem 5 of \cite{bochnakkucharz1984}.  Indeed, (i) implies (ii) of this theorem  gives :\\
\emph{If $(V,0) \subset  (\K^{n},0)$ is a Nash set germ then there is a Nash diffeomorphism 
$ h : (\K^n,0) \to (\K^n,0)$ such that for any analytic irreducible component $W$ of $(V,0)$ the ideal of functions 
vanishing on $h(W)$ is generated by polynomials.}

Now, if $\K=\C$, it suffices to apply the above result to $(V,0)$ defined as the zero set of the product of $f_i$'s.  

If $\K=\R$ such a set theoretic statement is not sufficent but in this case 
Theorem \ref{bochnakkucharzbetter} follows from the proof of Theorem 5 of \cite{bochnakkucharz1984}. 
 We sketch this argument below.  
 
 First we consider $\K=\C$.  
 Choose representatives $f_i : U \to \C$ of the germs $f_i$, $i=1, ..., m$, and let $f=(f_1, ... , f_m): U \to \C^m$.  
 By Artin-Mazur Theorem, \cite{BCR} Theorem 8.4.4,   \cite{bochnakkucharz1984} Proposition 2, there is an algebraic set $X\subset \C^n \times \C^N$ of dimension $n$, a polynomial map 
 $\Phi : \C^n \times \C^N \to \C^m$, and a Nash map $s: U \to X$ such that $f=\Phi \circ s$,  $s: U \to s(U)$ is 
 a Nash diffeomorphism and $s(U) \cap Sing (X) = \emptyset$.  ($X$ is the normalization of the Zariski closure of 
 the graph of $f$.)    
We may assume that $p=s(0)$ is the origin in $ \C^n \times \C^N $.  

Let   $\pi : X\to \C^n$ be a  generic linear projection.   
Then the germ $h$ of $ (\pi \circ s)^{-1} $ satisfies the claim.   Indeed, denote $X_i = X\cap \Phi_i^{-1} (0)$.   Then for each $i=1, ..., m$, $Z_i=\pi(X_i) $ is an algebraic subset of $\C^n$ and moreover, $\pi$ induces 
a local isomorphism 
$(X_i, 0)\to (Z_i,0)$.  We fix a reduced polynomial $P_i$ that defines $Z_i$.  Then 
$f_i \circ h $, as a germ at the origin, vanishes exactly on $Z_i$ and hence equals a power of $P_i$ times an 
analytic unit.  

If $\K=\R$ then we apply the complex case to the complexifications of  the $f_i$'s keeping the construction 
conjugation invariant.  In particular the linear projection can be chosen real (that is conjugation invariant).  
Indeed, this projection is 
from the Zariski open dense subset  $\mathcal U$ of the set of linear projections $L (\C^n \times \C^N , \C^n)$.  
Then $\mathcal U \cap L (\R^n \times \R^N , \R^n) $ is non-empty. (A complex polynomial of $M$ variables  that vanishes on $\R^M$ is identically equal to zero).   This ends the proof.  
\end{proof}

\begin{thm}\label{homeoifunit}
Let $\K = \R$ or $\C$.  Let  $f: (\K^{n},0)\to (\K, 0)$ be an analytic function germ and let 
$u: (\K^{n},0)\to \K$ be an analytic unit, $u(0)\ne 0$ ($u(0)>0$ if $\K=\R$).  Let $(V_i,0) \subset 
(\K^{n},0)$ be a finite family of analytic set germs.  
Then there is a homeomorphism germ $ \sigma : (\K^n,0) \to (\K^n,0)$ such that $(\sigma (V_i),0) =(V_i,0)$ 
for each $i$ and $uf= f \circ  \sigma$.    
\end{thm}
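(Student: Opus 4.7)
The plan is to apply the Varchenko-style equisingularity theorems of Section \ref{functiongerms} to the analytic one-parameter family $f_t:=u^t f$ interpolating $f_0=f$ with $f_1=uf$. Since $u(0)\ne 0$ (with $u(0)>0$ in the real case), an analytic branch of $\log u$ exists near $0$, and $u_t(y):=\exp(t\log u(y))$ is jointly analytic in $(t,y)\in \K\times(\K^n,0)$ with $u_0\equiv 1$ and $u_1=u$. For every $t$ the germ $u_t$ is an analytic unit, $\{f_t=0\}=\{f=0\}$, and $f_t(0)=0$.

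Following the strategy used in the proof of Proposition \ref{homeotoNash2}, I would choose analytic function germs $g_1,\dots,g_p:(\K^n,0)\to(\K,0)$ with $g_1=f$ such that for every $i$, the ideal of $V_i$ is generated by some of the differences $g_m-g_1$. Replacing $g_1$ by $u_t g_1$ yields the $t$-dependent family $\{u_t g_1, g_2, \dots, g_p\}$, with $t\in\K$ playing the role of Varchenko's parameter. I would then run the construction of Section \ref{construction:new} for this family: Weierstrass preparation with $t$ as an extra analytic parameter produces pseudopolynomials $F_i(t,x^i)$ with coefficients analytic in $(t,x^{i-1})$. Because $u_t$ is a unit for all $t$ near $[0,1]$ and the $g_m$ $(m\geq 2)$ are $t$-independent, the inductive generalized-discriminant procedure goes through, with only $t$-dependent unit factors absorbed into the $u_i(t,x^i)$ appearing in \eqref{polynomialsnew:f_i}. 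Thus the hypotheses (1), (2'), (3'), (4)--(6) of Section \ref{pseudopolynomials2} hold uniformly in $t$.

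Theorem \ref{theoremy}, together with its factor-preserving strengthening (Theorem \ref{theorem2}), then yields a continuous family of homeomorphisms $h_t$ preserving the levels of $x_1$ and each factor of the top-degree pseudopolynomial corresponding to $x_1-g_m$. Via the mechanism of Theorem \ref{theoremg}, the formula $\sigma_t(y):=\hat h_t(g_1(y),y)$ defines a continuous family of homeomorphism germs of $(\K^n,0)$ satisfying $f(y)=u_t(\sigma_t(y))\,f(\sigma_t(y))$. Since $h_t$ preserves each factor $x_1-g_m(y)$ and the $g_m$ for $m\geq 2$ are independent of $t$, the sets $\{g_m-g_1=0\}$, and therefore each $V_i$, are preserved by $\sigma_t$. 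Setting $t=1$ and $\sigma:=\sigma_1^{-1}$ then gives $f\circ\sigma=uf$ with $\sigma(V_i)=V_i$. The real case follows from the conjugation-invariance of Varchenko's construction, exactly as in the proofs of Corollary \ref{homeotoNash} and Proposition \ref{homeotoNash2}.

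The main obstacle I foresee is verifying uniformly in $t$ the equisingularity conditions (2'), (3'), (6) of Section \ref{pseudopolynomials2}, i.e., checking that the inductive Weierstrass-preparation and generalized-discriminant steps of Section \ref{construction:new} truly commute with the extra analytic parameter $t$. The crucial technical point is that the unit factor $u_t$ does not alter the zero loci or the multiplicities of the pseudopolynomials produced by the construction, so the integers $j_i$ and $p_i$ may be chosen independently of $t$ in a neighborhood of $[0,1]$, and the relevant $u_i$'s remain units throughout the interpolation.
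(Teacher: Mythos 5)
The paper proves Theorem \ref{homeoifunit} by an entirely different route: it constructs a Thom $(a_F)$-regular stratification of the deformation $F(x,t)=f(x)\bigl(1-t+tu(x)\bigr)$ over $U\times I$ (deriving $a_F$-regularity from an $a_f$-regular stratification of $f$ alone, via an elementary gradient estimate) and then concludes by the second Thom--Mather Isotopy Lemma. Your proposal instead tries to re-run the Varchenko normal-system machinery of Sections \ref{varchenko} and \ref{functiongerms} on the family $\{u_t g_1, g_2,\dots,g_p\}$. This is genuinely different, but it has two real gaps.

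The first and decisive one is that the claimed preservation of the $V_i$ is false. In your family only the first function is made $t$-dependent, so the factor of the top pseudopolynomial corresponding to $m=1$ is $x_1-u_t g_1$, which is carried by $h_t$ from $\{x_1=g_1\}$ at $t=0$ to $\{x_1=u_t g_1\}$ at time $t$; it is not preserved. Tracing the argument of Theorem \ref{theoremg}: for $y\in X_m=\{g_m=g_1\}$ the point $(g_1(y),y)$ lies on the graphs of $g_1$ and of $g_m$, and $h_t$ sends it to a point $(g_1(y),\sigma_t(y))$ lying on the graphs of $u_t g_1$ and of $g_m$, whence $g_m(\sigma_t(y))=u_t(\sigma_t(y))\,g_1(\sigma_t(y))$. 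Thus $\sigma_t(X_m)=\{g_m=u_t g_1\}$, not $X_m$. This mirrors exactly the caveat the paper records after \eqref{homeofunction}: the homeomorphism built from $h_t$ matches $X_m$ with its $t$-deformed counterpart $\hat X_m$, not with itself. In the paper's Proposition \ref{homeotoNash2} that is fine, because the conclusion only asserts $\sigma(\hat V_i)=V_i$ with $\hat V_i\neq V_i$; here you need $\sigma(V_i)=V_i$ on the nose, and you do not get it. (One could try $\{u_t g_1,\dots,u_t g_p\}$ instead, which does give $\sigma_t(X_m)=X_m$ formally, but then the normal-system problem below becomes even harder, since $u_t$ depends on all of $x_2,\dots,x_n$ and therefore does not interact nicely with the nested truncations $x^i$.)

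The second gap is the one you flagged yourself and then asserted away: you must verify that the Weierstrass-preparation/generalized-discriminant procedure of Section \ref{construction:new} produces a single normal system $F_i(t,x^i)$ satisfying (1), (2'), (3'), (4)--(6) uniformly for $t$ in a neighborhood of $[0,1]$. That requires the index $j_i$ of the first non-vanishing generalized discriminant, the Weierstrass degrees $p_i$, the exponents $q_i$, and the unit property of each $u_i(t,x^i)$ to hold simultaneously for all such $t$; none of this follows from ``$u_t$ is a unit for all $t$''. The whole point of the Nested Artin--P\l oski step in the paper's proofs of Theorems \ref{homeotoalgebraic} and \ref{homeotopolynomial} is precisely to manufacture a $t$-family in which the discriminant relations \eqref{polynomialsnew:f_i}--\eqref{discriminantsnew:i} hold identically, by substituting $z\mapsto tz(x)$ into algebraic solutions of those relations. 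Your deformation $u_t f$ carries no such structure, and you would need a separate argument (for instance, a direct verification that the discriminant loci of the $F_i$ deform flatly over $t$) to justify applying Theorem \ref{theoremy}. As written, this step is a genuine missing ingredient rather than a routine check.
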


\begin{proof}
If $\K=\C$ we suppose additionally that the segment that joins $u(0)$ and $1$ does not contain $0$.  The general case can be reduced to this one.  

Fix a small neighborhood $U$ of the origin in $\K^n$ so that the representatives $V_i\subset U$, $f:U\to \K$, 
and $u: U\to \K$ are well-defined.  In the proof we often shrink $U$ when necessary.  Let $I$ denote a small neighborhood of $[0,1]$ in $\R$.  We construct a Thom stratification of the deformation 
$\Psi(x,t)= (F(x,t),t): U\times I \to \K \times I$, where 
$$
F(x,t) = f(x)(1-t + t u(x))  = f(x)(1+ t (u(x)-1))  ,  \quad (x,t)\in U\times I, 
$$
that connects $f(x) = F(x,0)$ and $u(x)f(x) = F(x,1)$.  
 Then we conclude by the second Thom-Mather Isotopy Lemma.  For the Thom 
stratification we refer the reader to \cite{gibson}, Ch. 1, and for the Thom-Mather Isotopy Lemmas to 
\cite{gibson}, Ch. 2.

Fix a Thom stratification $\mathcal S' = \{S'_j\}$ of $f:U\to \K$  such that each $V_i$ is a union of strata.  
That means that $\mathcal S'$ is a Whitney stratification of $U$, compatible with $f^{-1} (0)$  and $f^{-1} (\K \setminus \{0\})$, that satisfies Thom's $a_f$ condition.  (It is well-known that such a stratification exists, 
the existence of $a_f$ regular stratifications was first proved in the complex analytic case by H. Hironaka in \cite{hironaka},  using resolution of singularities, under the assumption "sans \'eclatement" which is always satisfied for functions.  In the real subanalytic case it was first shown in \cite{kurdykaraby}) 

First we show that $\mathcal S= \{S_j=S'_j\times I\}$ as a stratification of $U\times I$ satisfies $a_F$ condition   
\begin{itemize}
\item [($a_F$)] 
\emph{for every stratum $S \subset F^{-1} (\K \setminus \{0\})$ and every sequence of points 
$p_i = (x_i,t_i) \in S$ that converges to a point $p_0= (x_0,t_0)\in S_0 \subset F^{-1} (0)$,  such that 
$\ker d_{p_i} F|_{T_ {p_i}S}\to T$, we have $T\supset  T_ {p_0}S_0$.}
\end{itemize}
By the curve selection lemma it suffices to check this condition on every real analytic curve 
$p(s)= (x(s),t(s)) : [0, \varepsilon) \to S\cup S_0$, $p(0)\in S_0$ and 
$p(s) \in S$ for $s> 0$.  Since $\mathcal S'$ satisfies $a_f$, 
the condition $a_F$ for $\mathcal S$ follows from the following lemma.  

\begin{lem}
Let 
 $S=S'\times I  \subset F^{-1} (\K \setminus \{0\})$ and  $S_0 \subset F^{-1} (0)$ be two strata of $\mathcal S$ and 
let $p(s)= (x(s),t(s)) : [0, \varepsilon) \to U\times I$ be a real analytic curve such that $p_0=p(0)\in S_0$ and 
$p(s) \in S$ for $s> 0$.  Then for $s>0$ and small, $\grad_x F|_S(p(s)) \ne 0$ and 
\begin{align}\label{limits}
\lim_{s\to 0}  \frac { \grad F|_S(p(s))}  {\| \grad F|_S(p(s))\|} = \lim_{s\to 0}  \frac { (\grad f|_{S'}(x(s)), 0) }
{ \| \grad f|_{S'}(x(s)) \| }
\end{align}
\end{lem}

\begin{proof}
By assumption $f(x(s)) = \sum_{i=i_0}^ \infty a_i s^i$, with $i_0>0$ and $a_{i_0}\ne 0$.    By differenting we obtain 
$$%\begin{align}
|\frac {df}{ds}| =| \prodscal{\grad f|_{S'}} {x'(s)} | \le \|\grad f|_{S'}\| \| x'(s)\| .
$$%\end{align}
Hence there exists $C>0$ such that for small $s>0$ 
\begin{align}\label{bound}
|f(x(s))| \le s C \|\grad f|_{S'}\| .
\end{align}
Moreover
\begin{align}\label{gradients}
\grad F|_S(p(s)) = & (\grad f|_{S'}(x(s)), 0) (1+t(s)(u(x(s))-1) \\ &+ f(x(s)) (t(s) \grad u|_{S'} (x(s)), u(x) -1) .
\end{align}

Now \eqref{limits} follows easily from \eqref{bound} and \eqref{gradients}. 
\end{proof}

Finally $\mathcal S$ as a stratification of $U\times I$ together with  $((\K\setminus \{0\})\times I, \{0\}\times I)$ 
as a stratification of 
$\K\times I$ is a Thom stratification of $\Psi$.  Indeed,  $\mathcal S$ is a Whitney 
stratification as the product of a Whitney stratification of $U$ times $I$.  Secondly, for any pair of strata 
$S=S'\times I \subset F^{-1} (\K \setminus \{0\})$ and $S_0=S_0'\times I  \subset F^{-1} (0)$ it satisfies $a_F$ and hence also  $a_\Psi$ condition.  
 Therefore Theorem \ref{homeoifunit} follows from the second Thom-Mather Isotopy Lemma,  \cite{gibson}, Ch. 2 (5.8).  
\end{proof}

Now we may conclude the proof of Theorem \ref{generalhomeo}.  
By Proposition \ref {homeotoNash2} we may assume that $g$ is a Nash function germ and the $V_i$'s are Nash sets germs.  Moreover by Theorem \ref{bochnakkucharzbetter}, after composing with the Nash diffeomorphism 
$h$, we may assume that $g$ equals a polynomial times an analytic unit and that the ideal of analytic function 
germs defining each $V_i$ is generated by polynomials.  In particular each $V_i$ is algebraic.  Finally we 
apply Theorem \ref{homeoifunit} to show that, after composing with a homeomorphism preserving 
each $V_i$, $g$ becomes a polynomial.   
%%%%%%%%%%%%%%%%%%%%%%%%%%%%%%%%%%%%

%%%%%%%%%%%%%%%%%%%%%%%%%%%%%%%%%%%%%%%%%%%%%%%%%%%%%%%%

%%%%%%%%%%%%%%%%%%%%%%%%%%%%%%%%%%%%%%%%%%%%%%%%%%%%%

\section{Examples}\label{examples}
%In this Section we present some examples showing that Theorems \ref{homeotoalgebraic}, \ref{homeotopolynomial} and \ref{generalhomeo}  are false if we replace "homeomorphism" by "diffeomorphism.

\begin{example}\label{whitney1}
We give here an example showing that the $\mathcal{C}^1$ analog of Theorems \ref{homeotoalgebraic} or  \ref{homeotopolynomial} is false in the real case (this example is well-known, see \cite{bochnakkucharz1984} for example).  
The germ $(V,0)\subset (\R^3,0)$, defined by the vanishing of
$$f(t,x,y)= xy(y-x)(y-(3+t)x)(y-\gamma(t)x)$$
where $\gamma(t)\in\R\{t\}$ is transcendental and $\gamma(0)=4$, is not $C^1$-diffeomorphic to 
the  germ of an algebraic set as follows from the argument of  Whitney, cf. Section 14 of \cite{whitney}.   Indeed $V$ is the union of five smooth surfaces intersecting along the $t$-axis and its tangent cone at the point $(t,0,0)$ is the union of five planes intersecting along a line. The cross-ratio of the first four planes is $3+t$ and the cross-ratio of the first three and the last plane is $\gamma(t)$. Since the cross-ratio is preserved by linear maps, these two cross-ratios are preserved by $C^1$-diffeomorphisms. But these two cross-ratios are algebraically independent thus the  image of $V$ under a $C^1$-diffeomorphisms cannot be algebraic. 
\end{example}

\begin{example}\label{whitney2}
The previous example also shows that the $C^1$ analogs of Theorems \ref{homeotoalgebraic} or  \ref{homeotopolynomial} are false in the complex case.
Define $V$ in a neighborhood of $0$ in $\C^3$ by the vanishing of the polynomial of Example \ref{whitney1}.
Modifying Whitney's argument (cf.  \cite{whitney}, pp 240, 241) we will show that the germ of $V$ 
at $0$ is not $C^1$-equivalent to any Nash germ in $\mathbb{C}^3.$

For any $(t,0,0)\in\mathbb{C}^3$ with $|t|$ small, the tangent cone to $V$ at $(t,0,0)$ is
the union of five two-dimensional $\mathbb{C}$-linear spaces $L_{1,t},\ldots,L_{5,t},$ where
$L_{j,t}$ corresponds to the $j$'th factor of $f.$
Suppose that there is a $C^1$-diffeomorphism
$\Phi:(\mathbb{C}^3,0)\rightarrow(\mathbb{C}^3,0)$ such that $\Phi(V)$ is a germ of a Nash
set in $\mathbb{C}^3.$ Then the tangent cone to $\Phi(V)$ at $\Phi{(t,0,0)}$ is the union of
$d_{(t,0,0)}\Phi(L_{j,t}),$ for $j=1$,..., 5.  In particular, every
$d_{(t,0,0)}\Phi(L_{j,t})$ is a $\mathbb{C}$-linear subspace of $\mathbb{C}^3$ of dimension
$2.$

Let us check that for every $t\in\mathbb{R}$ with $|t|$ small and for every pairwise distinct
$k_1,\ldots,k_4\in\{1,\ldots,5\},$ the cross-ratio of $L_{k_1,t}$,..., $L_{k_4,t}$ is equal to the
cross-ratio of $d_{(t,0,0)}\Phi(L_{k_1,t})$,..., $d_{(t,0,0)}\Phi(L_{k_4,t}).$ By a real line
in $\mathbb{C}^3$ we mean a set of the form $\{a+tb:t\in\mathbb{R}\}$ where
$a,b\in\mathbb{C}^3.$ For $t\in\mathbb{R},$ $L_{k_1,t},\ldots,L_{k_4,t}$
are defined by real equations so there is a real line $l_t\subset\mathbb{C}^3$ intersecting
$L_{k_1,t}\cup\ldots\cup L_{k_4,t}$ at exactly four points, say $a_{1,t}$,..., $a_{4,t}.$ 
Then the cross-ratio of
$L_{k_1,t},\ldots,L_{k_4,t}$ equals the cross-ratio of $a_{1,t}$,..., $a_{4,t}.$ Moreover,
$d_{(t,0,0)}\Phi(l_t)$ is also a real line and it intersects
$d_{(t,0,0)}\Phi(L_{k_1,t})\cup\ldots\cup d_{(t,0,0)}\Phi(L_{k_4,t})$ at
$d_{(t,0,0)}\Phi(a_{1,t})$,..., $d_{(t,0,0)}\Phi(a_{4,t}).$ The cross-ratio of the last four points
equals that of $a_{1,t}$,..., $a_{4,t}$ because $d_{(t,0,0)}\Phi$ is $\mathbb{R}$-linear and
$a_{1,t},\ldots,a_{4,t}\in l_t.$  
Since the cross-ratio of $d_{(t,0,0)}\Phi(a_{1,t})$,...,  $d_{(t,0,0)}\Phi(a_{4,t})$ equals the cross-ratio of
$d_{(t,0,0)}\Phi(L_{k_1,t})$,..., $d_{(t,0,0)}\Phi(L_{k_4,t}),$ we obtain our claim.

Now observe that the complex $t$-axis is the singular locus of $V$ and its image $S$ by
$\Phi$ is the singular locus of $\Phi(V).$ Clearly, $S$ is a smooth complex Nash curve.
Moreover, the cross-ratios $h_1, h_2$ of
$d_{(t,0,0)}\Phi(L_{1,t}),\ldots,d_{(t,0,0)}\Phi(L_{4,t})$ and $d_{(t,0,0)}\Phi(L_{1,t})$,
$d_{(t,0,0)}\Phi(L_{2,t})$, $d_{(t,0,0)}\Phi(L_{3,t})$, $d_{(t,0,0)}\Phi(L_{5,t}),$ respectively,
de\-pend algebraically on $s=\Phi(t,0,0)\in S$ (cf. \cite{whitney}, p 241), i.e. $h_1,
h_2:S\rightarrow\mathbb{C}$ are complex Nash functions. On the other hand, 
the cross-ratios of $L_{1,t},\ldots, L_{4,t}$ and of $L_{1,t}, L_{2,t},
L_{3,t}, L_{5,t}$ equal $3+t$ and $\gamma(t),$ respectively.

The last two paragraphs imply that for $t\in\mathbb{R}$ with $|t|$ small, we have
$h_1(\Phi(t,0,0))=3+t$ and $h_2(\Phi(t,0,0))=\gamma(t).$ Since $S$ is a smooth complex Nash curve,
we may assume that $h_1,h_2$ are defined in some neighborhood of $0\in\mathbb{C}$ and that
$\Psi(t)=\Phi(t,0,0)$ is a map into $\mathbb{C}.$ We have $\Psi(t)=\Psi_1(t)+i\Psi_2(t)$ where
$\Psi_1,\Psi_2$ are real valued continuous functions and $h_1(s)=u_1(s)+iv_1(s),$
where $u_1,v_1$ are real Nash functions, and
$u_1(\Psi_1(t),\Psi_2(t))=3+t,$ and $v_1(\Psi_1(t),\Psi_2(t))=0$ for $t\in\mathbb{R}$ with
$|t|$ small. Since $u_1, v_1$ satisfy the Cauchy-Riemann equations and $h_1$
is not constant, neither of $u_1, v_1$ is constant. Consequently, $\Psi_1|_{\mathbb{R}},$
$\Psi_2|_{\mathbb{R}}$ are semi-algebraic functions, which contradicts the fact that
$h_2(\Psi(t))=\gamma(t)$ for real $t.$
\end{example}

\begin{example}\label{2functions1}
Theorem \ref{homeotopolynomial} cannot  be extended to many functions or to maps to $\K^m$, $m>1$. For example the one variable  analytic germs $x$ and $e^{x}-1$ cannot be made polynomial (or Nash) simultaneously  by composing with the same homeomorphism. 
\end{example}

\begin{example}\label{2functions2}
The key point in the previous examples is the fact that two one variable functions which are algebraically independent remain algebraically independent after composition with a homeomorphism.  
Theorem \ref{homeotopolynomial} also cannot  be extended to many functions, even if we assume them algebraically depended.  For instance, one variable Nash germs $x$ and $y(x) =\sqrt{\varphi (x) } -2 $, with
$\varphi (x) =  (x-1)(x+2)(x-2) $, cannot be made polynomial simultaneously since the cubic $y^2 =\varphi(x) $ is not rational.  

\end{example}

%%%%%%%%%%%%%%%%%%%%%%%%%%%%%%%%%%%%%%%%%%%%%%%%%%%%%%%

\end{document}